\theoremstyle{definition}
\newtheorem{definition}{Definition}[section]
\newtheorem{theorem}{Theorem}[section]
\newtheorem{proposition}{Proposition}[section]
\newtheorem{corollary}{Corollary}[theorem]
\newtheorem{lemma}{Lemma}[theorem]
\DeclareMathOperator{\sign}{sign}
\DeclareMathOperator{\E}{\mathbb{E}}
\DeclareMathOperator{\var}{\mbox{Var}}
\DeclareMathOperator{\cond}{\left.\right|}
\DeclareMathOperator{\iid}{\overset{iid}{\sim}}
\begin{document}

\title{Broadly discrete stable distributions}
\author{F. William Townes\\
        Department of Statistics and Data Science\\
        Carnegie Mellon University\\
        \texttt{ftownes@andrew.cmu.edu}
        }

\lhead{Townes 2025}
\rhead{Broadly discrete stable distributions}

\maketitle

\begin{abstract}
Stable distributions are of fundamental importance in probability theory, yet their absolute continuity makes them unsuitable for modeling count data. A discrete analog of strict stability has been previously proposed by replacing scaling with binomial thinning, but it only holds for a subset of the tail index parameters. Here, we generalize the discrete stable class to the full range of tail indices and show that it is equivalent to the mixed Poisson-stable family. This broadly discrete stable family is discretely infinitely divisible, with a compound Poisson representation involving a novel generalization of the Sibuya distribution. Under additional parameter constraints, they are also discretely self-decomposable and unimodal. The discrete stable distributions provide a new frontier in probabilistic modeling of both light and heavy tailed count data.
\end{abstract}

\tableofcontents

\section{Introduction}

Stable distributions are an essential topic in probability and stochastic processes, and a great deal of classical statistics rests on the foundation of convergence properties such as the central limit theorem. Stable distributions are closed under convolution and location-scale transformations (Definition \ref{def:stable}). Random variable $X$ with distribution $F$ is stable if for all $a,b>0$ there exists $c>0$ and $d\in\mathbb{R}$ such that $a X_1+ b X_2\overset{d}{=}cX+d$ where $X_1,X_2$ are independent copies of $X$. If this condition is satisfied with $d=0$ we say $F$ is strictly stable, and otherwise it is broadly stable. The stable family is indexed by a parameter $\alpha\in(0,2]$ which controls the heaviness of the tails, with smaller values indicating heavier tails, and $\alpha=2$ corresponding to Gaussians. All nondegenerate stable distributions are absolutely continuous\cite{nolan:2018}, which makes them unsuitable for modeling count data, whose distributions are supported on $\mathbb{N}_0=0,1,2,\ldots$. A discrete analog of strict stability was proposed in \cite{steutelDiscreteAnaloguesSelfDecomposability1979}, by replacing scaling with binomial thinning and restricting $\alpha\leq 1$. The probability generating function (PGF) of these strictly discrete stable distributions matches the Laplace transform of a maximally skewed (``extreme stable'') density \cite{guptaMultiscalingPropertiesSpatial1990,samorodnitskyStableNonGaussianRandom1994} with the same $\alpha$. This implies (eg, Lemma \ref{lem:pgf-mpoi}) they are equivalent to mixed Poisson-stable distributions with $\alpha\leq 1$. 

It seems natural to ask whether a Poisson-stable family with $\alpha\in (1,2]$ might have properties corresponding to a discrete notion of broad stability. However, to our knowledge this has not been previously investigated in the literature. One possible reason is that every stable distribution with $\alpha>1$ has real-valued support, seemingly ruling them out as valid mixing distributions, which have traditionally been assumed to be nonnegative. However, we have recently shown that mixed Poisson distributions can be constructed with real-valued mixing distributions, so long as the left tail decays suitably rapidly \cite{townesMixedPoissonFamilies2024}. Furthermore, this requirement is satisfied by the extreme stable family for the full range of $\alpha\in (0,2]$ so long as certain location-scale parameter constraints are enforced. One particularly interesting example is the Hermite distribution, which corresponds to the Poisson-Gaussian mixture ($\alpha=2$) \cite{kempAlternativeDerivationHermite1966}.

In the present work, we propose a broader definition of discrete stability by replacing location shifts with \textit{Poisson translation}, which has been previously described in \cite{jorgensenDiscreteDispersionModels2016}. We prove that the mixed Poisson-stable distributions are the unique family with this property. These broadly discrete stable distributions form a natural generalization of the strictly discrete stable family of \cite{steutelDiscreteAnaloguesSelfDecomposability1979}. They are all discretely infinitely divisible and hence have a compound Poisson representation. The corresponding summand distribution appears to be a novel generalization of the Sibuya family \cite{sibuyaGeneralizedHypergeometricDigamma1979,devroyeTriptychDiscreteDistributions1993}. Finally, we show that a subset of the broadly discrete stable distributions are discretely self-decomposable according to the definition of \cite{steutelDiscreteAnaloguesSelfDecomposability1979}, and are therefore unimodal. On the other hand, the rest of the family exhibits unusual multimodality. 

\section{Preliminaries}

For concision, we will use the abbreviation RV for random variable. The term ``count distribution'' will refer to nonnegative, discrete distributions taking on values in $\mathbb{N}_0=\{0,1,\ldots\}$. A ``count variable'' is a RV following a count distribution. We will adopt the convention that $0\log 0=0$. The notation $X_i\iid F$ means $X_1,X_2,\ldots$ are independent RVs with identical distribution $F$.

\subsection{Generating functions}

\begin{definition}
\label{def:laplace-transform}
We define the bilateral Laplace-Stieltjes transform (BLT) of a RV $X$ with distribution function $F$ as 
\[\mathcal{L}_X(t)=\mathcal{L}(t;X)= \E[\exp(-tX)]=\int_{-\infty}^\infty \exp(-tx)dF(x)\]
 \end{definition}
We neither require that $X$ has a density nor that it be nonnegative. If $\mathcal{L}_X(t)<\infty$ for all $t$ in an open interval around $t=0$, then $X$ has a moment generating function (MGF) which is $M_X(t)= \E[\exp(tX)]=\mathcal{L}_X(-t)$. If $M_X(t)$ exists, we refer to $K(t;X)=\log M_X(t)$ as the cumulant generating function (CGF).

\begin{definition}
\label{def:pgf}
The probability generating function (PGF) of a count variable $X$ is given by
\[G_X(z)=G(z;X)=\E[z^X] = \sum_{n=0}^\infty z^n\Pr(X=n)\]
\end{definition}

\begin{lemma}\cite{fellerIntroductionProbabilityTheory1971} (p. 223)
\label{lem:valid-pgf}
An analytic function $G(z)$ is a valid PGF if and only if it satisfies
\begin{enumerate}[label=(\roman*)]
 \item $G(1)=1$
 \item $G(z)$ is continuous for $z\in[0,1]$
 \item Absolute monotonicity: finite derivatives $G^{(k)}(z)\geq 0$ for all $z\in (0,1)$
\end{enumerate}
\end{lemma}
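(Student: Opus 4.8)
The plan is to prove the two implications separately, treating the ``only if'' direction as a direct verification and reserving the real work for the ``if'' direction, where one must reconstruct a genuine probability mass function from the analytic function $G$.

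For necessity, I would start from the defining series $G(z) = \sum_{n=0}^\infty p_n z^n$ with $p_n = \Pr(X=n) \geq 0$ and $\sum_n p_n = 1$. Condition (i) is immediate since $G(1) = \sum_n p_n = 1$. Because the series converges at $z=1$, its radius of convergence is at least $1$, so $G$ is continuous on $[0,1)$ and, by Abel's theorem together with $\sum_n p_n = 1$, continuous at $z=1$ as well, giving (ii). Differentiating the power series term by term $k$ times yields $G^{(k)}(z) = \sum_{n \geq k} n(n-1)\cdots(n-k+1) p_n z^{n-k}$, a sum of nonnegative terms for $z \in (0,1)$, which establishes the absolute monotonicity in (iii).

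For sufficiency, suppose $G$ is analytic and satisfies (i)--(iii). Analyticity provides a Taylor expansion $G(z) = \sum_{n=0}^\infty a_n z^n$ with $a_n = G^{(n)}(0)/n!$, convergent on a neighborhood of the origin and, by (ii), extendable across $[0,1)$. The crucial step is to show each $a_n \geq 0$: since the derivatives $G^{(k)}$ are continuous (by analyticity) and nonnegative on $(0,1)$ by (iii), letting $z \to 0^+$ gives $G^{(k)}(0) = \lim_{z\to 0^+} G^{(k)}(z) \geq 0$, hence $a_k \geq 0$. It then remains to verify that the $a_n$ sum to one. Because every $a_n \geq 0$, for each $N$ the partial sum satisfies $\sum_{n=0}^N a_n = \lim_{z\to 1^-}\sum_{n=0}^N a_n z^n \leq \lim_{z\to 1^-} G(z) = G(1) = 1$, using (ii) and (i); thus $\sum_n a_n$ converges with $\sum_n a_n \leq 1$. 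Abel's theorem then upgrades this to $\sum_n a_n = \lim_{z\to 1^-} G(z) = 1$. Setting $\Pr(X=n) = a_n$ exhibits $G$ as a valid PGF.

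The main obstacle I anticipate is the boundary behavior at the two endpoints. Condition (iii) only asserts nonnegativity of the derivatives on the \emph{open} interval $(0,1)$, so obtaining $a_n = G^{(n)}(0)/n! \geq 0$ requires the continuity of each derivative up to $z=0$, which is exactly where analyticity (rather than mere smoothness) is doing the work; without it one would need the full strength of Bernstein's theorem on absolutely monotonic functions. The companion subtlety at $z=1$ is the passage from ``partial sums bounded by $1$'' to ``series equal to $1$,'' which hinges on invoking continuity (ii) and Abel's theorem correctly rather than assuming term-by-term evaluation at $z=1$ is automatic.
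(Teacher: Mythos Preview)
The paper does not actually prove this lemma: it is stated with a citation to Feller (1971, p.~223) and used as a black box. Your argument is essentially the standard one and is correct. One small point worth making explicit in the sufficiency direction: the inequality $\sum_{n=0}^N a_n z^n \le G(z)$ for $z\in(0,1)$, which you rely on when bounding the partial sums, is not automatic from analyticity alone (the Taylor series at $0$ need not a priori converge on all of $[0,1)$); it follows from Taylor's formula with integral remainder together with $G^{(N+1)}\ge 0$ on $(0,1)$, i.e.\ precisely from the absolute monotonicity hypothesis. Once that is in place, your Abel-theorem step gives $\sum_n a_n = \lim_{z\uparrow 1} G(z)=1$ and the argument closes. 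You already flag this boundary issue in your final paragraph, so this is a matter of tightening the write-up rather than a genuine gap.
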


\begin{definition}
\label{def:fcgf}
The factorial cumulant generating function (FCGF) of a count variable $X$ with $\Pr(X=0)>0$ is given by
\[C_X(t)=C(t;X)=\log\E[(1+t)^X] = \log G_X(t+1)\]
\end{definition}
The FCGF has $C(0;X)=0$ and if $X$ has FCGF $C(t;X)$ then its PGF is $G_X(z)=\exp(C(z-1;X))$. 

\begin{definition}\cite{steutelInfiniteDivisibilityProbability2003}
\label{def:pgf-rfunc}
Let $G(z)$ be a PGF with $G(0)>0$. The \textit{R-function} associated with $G(z)$ is
\[r(z)=\frac{d}{dz}\log G(z)\]
\end{definition}

\subsection{Mixed and compound Poisson distributions}

We use notation from \cite{gurlandInterrelationsCompoundGeneralized1957} to distinguish between mixed and compound distributions.

\begin{definition}
\label{def:mixed}
Let $X$ be a RV with distribution $F$. If RV $Y$ has conditional distribution $[Y\cond X]\sim Poi(X)$, then the marginal distribution of $Y$ is the \textit{mixed Poisson distribution} generated by $F$, denoted with
\[Y\sim \left(Poi \bigwedge F\right)\]
and having PMF
\begin{equation}
\label{eq:mpoi-pmf-simple}
f(n)=\frac{\E\left[X^n e^{-X}\right]}{n!}
\end{equation}
provided $f(n)$ is a valid PMF.
\end{definition}

\begin{lemma}
\label{lem:pgf-mpoi}
The PGF of a mixed RV $Y\sim \left(Poi\bigwedge F\right)$ is
\[G_Y(z) = \mathcal{L}_X(1-z)\]
where $X\sim F$, provided $G_Y(z)$ is a valid PGF.
\end{lemma}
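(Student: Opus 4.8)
The plan is to evaluate the PGF of $Y$ directly and reduce it to the defining expectation of the BLT. The most transparent route uses the law of total expectation together with the known PGF of a Poisson variable. Conditioning on $X$, we have $[Y\cond X]\sim Poi(X)$, and the PGF of a Poisson variable with mean $\lambda$ is $\exp(\lambda(z-1))$. Applying this with $\lambda=X$ and taking the outer expectation gives
\[
G_Y(z)=\E\left[\E\left[z^Y\cond X\right]\right]=\E\left[\exp(X(z-1))\right]=\E\left[\exp(-(1-z)X)\right],
\]
which is exactly $\mathcal{L}_X(1-z)$ by Definition \ref{def:laplace-transform}.

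Because the lemma is stated for a general distribution $F$ (not necessarily supported on $[0,\infty)$), I would also give a purely analytic derivation that does not presuppose a genuine conditional Poisson law. Starting from the explicit PMF $f(n)=\E[X^n e^{-X}]/n!$ in Definition \ref{def:mixed}, I would expand the PGF as a power series and interchange summation and expectation, recognizing the inner sum as the exponential series:
\[
G_Y(z)=\sum_{n=0}^\infty z^n f(n)=\sum_{n=0}^\infty \frac{z^n}{n!}\,\E\left[X^n e^{-X}\right]=\E\left[e^{-X}\sum_{n=0}^\infty \frac{(zX)^n}{n!}\right]=\E\left[e^{(z-1)X}\right]=\mathcal{L}_X(1-z).
\]

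The main obstacle is justifying the interchange of summation and expectation in the second derivation. When $X\geq 0$ the summands $z^n X^n e^{-X}/n!$ are nonnegative for $z\in[0,1]$, so Tonelli's theorem applies immediately. For real-valued $X$ the terms are no longer sign-definite and Tonelli fails; here I would instead establish absolute convergence by bounding $\sum_{n\geq 0} z^n |X|^n e^{-X}/n!=e^{-X}e^{z|X|}$ and requiring $\E[e^{-X}e^{z|X|}]<\infty$. This is precisely the kind of left-tail decay condition that makes $G_Y$ a genuine PGF, matching the ``provided $G_Y(z)$ is a valid PGF'' hypothesis of the statement and the real-valued mixing setup referenced in the introduction. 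Under that condition dominated convergence legitimizes the swap, and the identity $G_Y(z)=\mathcal{L}_X(1-z)$ holds on the relevant domain.
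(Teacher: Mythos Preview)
Your proof is correct and follows essentially the same approach as the paper: condition on $X$, expand $\E[z^Y\cond X]$ as the Poisson series $\sum_n z^n X^n e^{-X}/n!$, collapse it to $e^{(z-1)X}$, and identify the result as $\mathcal{L}_X(1-z)$. The paper does not separately justify the interchange of sum and expectation, so your additional discussion of Tonelli/dominated convergence for real-valued $X$ is extra rigor beyond what the paper provides.
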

\begin{proof}
\begin{align*}
G_Y(z) &= \E\big[\E[z^Y\cond X]\big] = \E\left[\sum_{n=0}^\infty \frac{z^n X^n \exp(-X)}{n!}\right]\\
&= \E\left[\exp(-X)\sum_{n=0}^\infty \frac{(zX)^n}{n!}\right]= \E\left[\exp(-X+zX)\right]= \mathcal{L}_X(1-z)
\end{align*}
\end{proof}
A straightforward corollary is if $X$ has CGF $K_X(t)$ then the FCGF of the mixed Poisson generated by $X$ is $C(t;Y)=K_X(t)$. It is well known that if $X$ and $Y$ are independent, then $G(z;X+Y)=G(z;X)G(z;Y)$. 

The class of valid mixing distributions can be characterized by the following properties of the BLT:
\begin{proposition} (Proposition 3.1 of \cite{townesMixedPoissonFamilies2024})
\label{prop:valid-blt}

Let $X$ be a random variable with distribution function $F$. The mixed Poisson distribution $Poi\bigwedge F$ exists iff $X$ has a BLT $\mathcal{L}_X(t)$ that is completely monotone for $t\in[0,1]$, i.e.:
\begin{enumerate}[label=(\roman*)]
 \item $\mathcal{L}_X(t)$ is continuous for $t\in[0,1]$
 \item For all $k\in \mathbb{N}_0$, finite derivatives satisfy $(-1)^k\mathcal{L}_X^{(k)}(t)\geq 0$ for $t\in (0,1)$.
\end{enumerate}
\end{proposition}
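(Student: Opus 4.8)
The plan is to reduce the statement to the two facts already in hand: Lemma \ref{lem:pgf-mpoi}, which identifies the PGF of $Poi\bigwedge F$ with the shifted transform $\mathcal{L}_X(1-z)$, and Lemma \ref{lem:valid-pgf}, which characterizes valid PGFs. The bridge between them is the affine substitution $z=1-t$, which carries $[0,1]$ onto itself. My first move would be to observe that, by Definition \ref{def:mixed}, $Poi\bigwedge F$ exists precisely when the candidate mass function $f(n)=\E[X^n e^{-X}]/n!$ is a genuine PMF, and that this happens exactly when its generating function $G(z)=\sum_{n\ge 0}f(n)z^n$ is a valid PGF in the sense of Lemma \ref{lem:valid-pgf}. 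By the computation in Lemma \ref{lem:pgf-mpoi}, this generating function is nothing other than $\mathcal{L}_X(1-z)$.

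With $G(z)=\mathcal{L}_X(1-z)$ in place, I would translate each of the three conditions of Lemma \ref{lem:valid-pgf} through the substitution $t=1-z$. Condition (i), $G(1)=\mathcal{L}_X(0)=1$, holds automatically because $F$ is a probability measure, so it imposes nothing. Condition (ii), continuity of $G$ on $[0,1]$, is equivalent to continuity of $\mathcal{L}_X$ on $[0,1]$ since $z\mapsto 1-z$ is a homeomorphism of the unit interval; this is exactly clause (i) of the proposition. For condition (iii), the chain rule gives $G^{(k)}(z)=(-1)^k\mathcal{L}_X^{(k)}(1-z)$, so absolute monotonicity $G^{(k)}(z)\ge 0$ on $(0,1)$ is equivalent to $(-1)^k\mathcal{L}_X^{(k)}(t)\ge 0$ on $(0,1)$, which is precisely clause (ii), complete monotonicity. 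Since every step is an equivalence, reading the resulting chain of biconditionals in both directions yields the claimed ``iff''.

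The hard part will be the first reduction, namely the rigorous equivalence between ``$f(n)$ is a valid PMF'' and ``$G(z)=\mathcal{L}_X(1-z)$ is a valid analytic PGF.'' Two points need care. First, to invoke Lemma \ref{lem:valid-pgf} I must know $G$ is analytic and that its Taylor coefficients at $z=0$ recover exactly $f(n)$; this rests on the identity $G^{(n)}(0)=(-1)^n\mathcal{L}_X^{(n)}(1)=\E[X^n e^{-X}]$, which in turn requires the one-sided derivatives of $\mathcal{L}_X$ to exist and be finite at the endpoint $t=1$. Second, in the regime of interest to this paper $X$ may take negative values, so $X^n e^{-X}$ is not sign-definite and the interchange of expectation with the series expansion of $e^{zX}$ underlying Lemma \ref{lem:pgf-mpoi} is not automatic; it must be justified by a dominated-convergence argument, which forces the left tail of $F$ to decay fast enough that the relevant moments are finite. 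Pinning down this boundary behavior at $t=1$ (equivalently $z=0$) is where the substantive work lies; once it is secured, the remainder is the purely formal translation above.
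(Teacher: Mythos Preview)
The paper does not prove this proposition; it is imported wholesale from \cite{townesMixedPoissonFamilies2024} (as the attribution in the statement indicates) and no argument is supplied here. There is therefore no in-paper proof to compare your attempt against.

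On its own merits, your outline is correct and is the natural route: couple Lemma~\ref{lem:pgf-mpoi} with Lemma~\ref{lem:valid-pgf} through the affine change $t=1-z$, and read the three PGF conditions off as complete monotonicity. You have also correctly located where the real content sits. Two remarks may streamline the ``hard part'' you flag. First, finiteness of $\mathcal{L}_X$ on $[0,1]$ already forces analyticity on the open interval $(0,1)$ with $\mathcal{L}_X^{(k)}(t)=\E[(-X)^k e^{-tX}]$ there, so differentiation under the integral and the interchange in Lemma~\ref{lem:pgf-mpoi} become standard once condition (i) is in force. Second, rather than chasing one-sided derivatives of $\mathcal{L}_X$ at the endpoint $t=1$, it is cleaner to invoke the local Bernstein--Widder theorem directly: a function continuous on $[0,1]$ and absolutely monotone on $(0,1)$ admits a power-series expansion $\sum_{n\ge 0}a_n z^n$ with $a_n\ge 0$ convergent on $[0,1)$, and together with $G(1)=1$ this already gives a bona fide PGF (hence a valid PMF) without ever evaluating $\mathcal{L}_X^{(n)}(1)$. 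The converse direction is immediate, since any PGF is by construction such a nonnegative power series.
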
 
It is well-known that Poisson mixtures can be formed from any nonnegative distribution. Since the class of nonnegative random variables is equivalent to the class of Laplace-Stieltjes transforms that are completely monotone on $[0,\infty)$ (\cite{bernsteinFonctionsAbsolumentMonotones1929,widderLaplaceTransform2010}), Proposition \ref{prop:valid-blt} shows that the class of Poisson mixtures is in fact much larger, and allows real-valued mixing distributions. 

We use the term compound distribution to refer to stopped sums. Let $X_1,X_2,\ldots\iid F$ be RVs with PGF $G_1(z)=\E[z^{X_n}]$ and $\Pr(X_n=0)=0$. If $N\sim Poi(\lambda)$ its PGF is $G_2(z)=\exp(\lambda(z-1))$, then $Y=\sum_{n=0}^N X_n$ has PGF $G_Y(z) = G_2(G_1(z)) = \exp\big(\lambda(G_1(z)-1)\big)$. 

\begin{definition}
\label{def:compound}
Let $H(z)$ be a PGF satisfying $H(0)=0$ with $F$ the corresponding distribution function. If the RV $Y$ has a PGF that satisfies 
\[G_Y(z) = \exp\big(\lambda(H(z)-1)\big)\]
then $Y$ follows the \textit{compound Poisson distribution} generated by $F$, denoted with 
\[Y\sim \left(Poi\bigvee F\right)\]
\end{definition}

In Definition \ref{def:compound}, we can expand the ``compounding'' or ``summand'' PGF as $H(z)=\sum_{n=1}^\infty z^n p_n$ where $\sum_n p_n=1$ to show that the compound Poisson PGF can be equivalently represented as
\begin{equation}
\label{eq:cpd-poi-pgf-expanded}
G_Y(z)=\prod_{n=1}^\infty \exp(\omega_n(z^n-1))
\end{equation}
where each $\omega_n=\lambda p_n\geq 0$ and $\sum_n \omega_n <\infty$ (not necessarily one). This indicates $Y\overset{d}{=}\sum_{n=1}^\infty n W_n$ where $W_n\sim Poi(\lambda p_n)$. Informally, each term $(nW_n)$ describes the value at time $t=1$ of a Poisson process where the jumps arrive at rate $\lambda p_n$ and are all of size $n$. 

\subsection{Binomial thinning and Poisson translation}

If $X$ and $Y$ are independent RVs (not necessarily discrete) with CGFs $K(t;X)$ and $K(t;Y)$, then for any constants $a,b\in\mathbb{R}$, 
$K(t;aX+bY)=K(at;X)+K(bt;Y)$.
Furthermore the CGF of a constant $\mu$ is simply $\mu t$. If we further assume $X$ and $Y$ have FCGFs $C(t;X)$ and $C(t;Y)$, it is easily shown that $C(t;X+Y)=C(t;X)+C(t;Y)$, suggesting the FCGF as a discrete analog of the CGF. 
\begin{definition}\cite{jorgensenDiscreteDispersionModels2016}
\label{def:dilation}
If $X$ is a count variable with PGF $G(z;X)$, the \textit{dilation} of $X$ by constant $a>0$ is denoted $a\circ X$ and has PGF
\[G(z;a\circ X)=G(1+a(z-1);X)\]
provided the right hand side is a valid PGF.
\end{definition}
If $X$ has an FCGF, $C(t;a\circ X)=C(at;X)$. In the case that $a\in[0,1]$ dilation is always well-defined, and is equivalent to binomial thinning, so that $a\circ X \overset{d}{=} \sum_{i=1}^X B_i$ where $B_i\iid Bern(a)$. 
\begin{definition}\cite{jorgensenDiscreteDispersionModels2016}
\label{def:translation}
If $X$ is a count variable with PGF $G(z;X)$, the \textit{Poisson translation} of $X$ by a constant $\mu\in\mathbb{R}$ is denoted $X\oplus \mu$ and has PGF
\[G(z;X\oplus \mu)=G(z;X)\exp(\mu(z-1))\]
provided the right hand side is a valid PGF.
\end{definition}
If $X$ has an FCGF, $C(t;X\oplus \mu)=C(t;X)+\mu t$. Poisson translation is always well-defined for $\mu\geq 0$ since the operation is equivalent to $X\oplus \mu\overset{d}{=} X+Y$ where $Y\sim Poi(\mu)$. 

\subsection{Discrete infinite divisibility}
\begin{definition}\cite{steutelInfiniteDivisibilityProbability2003}
\label{def:infdiv}
A probability distribution $F$ is \textit{infinitely divisible} (ID) if for every $n\in\mathbb{N}$ there exists another distribution $F_n$ such that for $X\sim F$ and $Z_{ni}\iid F_n$, 
\begin{equation}
\label{eq:infdiv}
X\overset{d}{=} \sum_{i=1}^n Z_{ni}
\end{equation}
\end{definition}
Equivalently, the characteristic function $\phi_X(t)$ is ID iff $\phi_X(t)^s$ is also a valid characteristic function for all $s>0$.



\begin{definition}\cite{grandellMixedPoissonProcesses2020}
\label{def:infdiv-discrete}
A PGF $G(z)$ is \textit{discretely infinitely divisible} (DID) if for every $n\in\mathbb{N}$ there exists another PGF $H_n(z)$ such that $G(z)=H_n(z)^n$. 
\end{definition}
In other words, a PGF (or equivalently, its corresponding distribution), is DID if it is ID and the summands in Equation \ref{eq:infdiv} are restricted to be count variables. 
The class of DID distributions is closed under convolution and binomial thinning.

\begin{lemma}
\label{lem:did-preserved}
(Proposition 6.1 of \cite{steutelInfiniteDivisibilityProbability2003})
If $Y$ is a DID RV with distribution $F_Y$ then the following are also DID:
\begin{enumerate}[label=(\roman*)]
\item $a\circ Y$ for $a\in [0,1]$.
\item $\sum_{i=1}^n Y_i$ where $Y_i\iid F_Y$ for any $n\in\mathbb{N}$.
\end{enumerate}
\end{lemma}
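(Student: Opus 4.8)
The plan is to verify the DID condition of Definition \ref{def:infdiv-discrete} directly for each operation, exploiting the fact that both binomial thinning and convolution act on PGFs multiplicatively or by composition, and therefore commute with the extraction of $n$-th roots. In each case the only real work is to confirm that the candidate $n$-th root is itself a legitimate PGF.

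For part (ii), let $Y$ be DID with PGF $G_Y(z)$, so that for each $n\in\mathbb{N}$ there is a PGF $H_n(z)$ with $G_Y(z)=H_n(z)^n$. The sum $S=\sum_{i=1}^m Y_i$ of independent copies has PGF $G_S(z)=G_Y(z)^m$. I would propose $K_n(z)=H_n(z)^m$ as the $n$-th root, since $K_n(z)^n=H_n(z)^{mn}=\big(H_n(z)^n\big)^m=G_Y(z)^m=G_S(z)$. Because a nonnegative integer power of a PGF is again a PGF (it is the PGF of a sum of independent copies), $K_n$ is valid, and this holds for every $n$, establishing that $S$ is DID.

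For part (i), fix $a\in[0,1]$; the thinned variable $a\circ Y$ has PGF $G_Y\big(1+a(z-1)\big)$ by Definition \ref{def:dilation}. Substituting $w=1+a(z-1)$ into $G_Y(w)=H_n(w)^n$ gives $G_Y\big(1+a(z-1)\big)=\big[H_n\big(1+a(z-1)\big)\big]^n$, so the natural candidate root is $K_n(z)=H_n\big(1+a(z-1)\big)$. The key observation is that $K_n$ is exactly the PGF of $a\circ Z_n$, where $Z_n$ is the count variable associated with $H_n$; since dilation by $a\in[0,1]$ coincides with binomial thinning and is therefore always well-defined (Definition \ref{def:dilation}), $K_n$ is a valid PGF for every $n$. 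Hence $a\circ Y$ is DID.

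The computations themselves are routine; the only point demanding care — and the crux of the argument — is the validity of the proposed roots. In part (ii) this is immediate from closure of PGFs under integer powers, while in part (i) it rests entirely on the fact that binomial thinning with $a\in[0,1]$ never leaves the class of PGFs. Were $a>1$ permitted, dilation could fail to produce a PGF and the argument would break down, which is precisely why the hypothesis restricts to $a\in[0,1]$.
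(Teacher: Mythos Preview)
Your proof is correct. The paper does not supply its own proof of this lemma; it merely cites Proposition~6.1 of Steutel and van Harn, so there is nothing to compare against. Your direct verification via Definition~\ref{def:infdiv-discrete} is the standard argument: in each part you correctly identify the $n$-th root PGF and confirm it is a legitimate PGF, using closure of PGFs under nonnegative integer powers for (ii) and the fact that dilation by $a\in[0,1]$ is binomial thinning (hence always well-defined) for (i).
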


Mixed Poissons can inherit the ID property from their mixing distributions.

\begin{lemma} (\cite{grandellMixedPoissonProcesses2020} p. 27)
\label{lem:did-mpoi}
If $F$ is a nonnegative ID distribution (not necessarily discrete), then $F'=Poi\bigwedge F$ is a DID count distribution.
\end{lemma}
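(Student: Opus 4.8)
The plan is to work entirely at the level of generating functions, using Lemma \ref{lem:pgf-mpoi} to convert the (ordinary) infinite divisibility of the mixing law $F$ into a discrete infinite divisibility statement for $F'=Poi\bigwedge F$. Write $X\sim F$ and $Y\sim F'$, so that by Lemma \ref{lem:pgf-mpoi} the PGF of $Y$ is $G_Y(z)=\mathcal{L}_X(1-z)$. To verify Definition \ref{def:infdiv-discrete} I must exhibit, for each $n\in\mathbb{N}$, a \emph{valid} PGF $H_n$ with $G_Y(z)=H_n(z)^n$.

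First I would produce the natural candidate. Since $X$ is nonnegative and ID, I claim there exists a nonnegative RV $X_n$ whose BLT satisfies $\mathcal{L}_{X_n}(t)=\mathcal{L}_X(t)^{1/n}$ for $t\geq 0$. Granting this, let $F_n$ be the law of $X_n$ and set $H_n(z)=\mathcal{L}_{X_n}(1-z)$. Because $X_n\geq 0$, the mixture $Poi\bigwedge F_n$ always exists (the well-known fact noted after Proposition \ref{prop:valid-blt}), so by Lemma \ref{lem:pgf-mpoi} the function $H_n$ is genuinely the PGF of a count variable. Then
\[H_n(z)^n=\mathcal{L}_{X_n}(1-z)^n=\big(\mathcal{L}_X(1-z)^{1/n}\big)^n=\mathcal{L}_X(1-z)=G_Y(z),\]
which is exactly the factorization demanded by Definition \ref{def:infdiv-discrete}, establishing that $F'$ is DID.

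The crux of the argument — and the step I expect to be the main obstacle — is the claim that a nonnegative ID law admits nonnegative ``$1/n$-th parts,'' i.e. that $\mathcal{L}_X(t)^{1/n}$ is again the BLT of a nonnegative RV. Definition \ref{def:infdiv} only guarantees summands $Z_{ni}$ with $\mathcal{L}_X=\mathcal{L}_{Z_{ni}}^n$, and one must argue these summands can be taken nonnegative rather than merely real-valued. The clean route is the subordinator structure of nonnegative ID distributions: such an $X$ has $\mathcal{L}_X(t)=\exp(-\Phi(t))$ for a Bernstein function $\Phi$ with $\Phi(0)=0$, whence $\mathcal{L}_X(t)^{1/n}=\exp(-\Phi(t)/n)$, and $\Phi/n$ is again a Bernstein function vanishing at the origin, so it is the BLT of a nonnegative RV $X_n$ (the associated subordinator evaluated at time $1/n$). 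Alternatively one can check that $\mathcal{L}_X(t)^{1/n}$ is finite and completely monotone on $[0,\infty)$ with the correct value and limit at the endpoints, and invoke the Bernstein--Widder theorem to recover a nonnegative law; nonnegativity of $X_n$ then suffices for the mixture to exist, so no further verification of the hypotheses of Proposition \ref{prop:valid-blt} is needed.
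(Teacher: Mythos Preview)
The paper does not supply its own proof of this lemma; it is simply quoted from \cite{grandellMixedPoissonProcesses2020}. Your argument is correct and is essentially the standard one: translate to Laplace transforms via Lemma \ref{lem:pgf-mpoi}, use the fact that a nonnegative ID law has $\mathcal{L}_X(t)=\exp(-\Phi(t))$ with $\Phi$ a Bernstein function vanishing at $0$, so that $\mathcal{L}_X^{1/n}=\exp(-\Phi/n)$ is again the Laplace transform of a nonnegative law $F_n$, and then the mixed Poisson $Poi\bigwedge F_n$ furnishes the required $n$-th root PGF. The one point worth tightening in your write-up is that ``$\mathcal{L}_X(t)^{1/n}$ is completely monotone'' is \emph{not} automatic from complete monotonicity of $\mathcal{L}_X$ alone (CM is not preserved under fractional powers in general); it genuinely uses the ID hypothesis through the Bernstein-function representation, so the ``alternative'' you sketch is really the same argument in different clothing rather than an independent route.
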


\begin{lemma}\cite{fellerIntroductionProbabilityTheory1968,steutelInfiniteDivisibilityProbability2003}
\label{lem:cpoi-did}
A count distribution is DID if and only if it is a compound Poisson distribution. 
\end{lemma}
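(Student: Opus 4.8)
The plan is to prove both implications, treating ``compound Poisson $\Rightarrow$ DID'' as the routine direction and concentrating on the converse. For the easy direction, suppose $Y\sim\left(Poi\bigvee F\right)$, so that $G_Y(z)=\exp\big(\lambda(H(z)-1)\big)$ for a PGF $H$ with $H(0)=0$. For each $n$ I would exhibit the required root directly: set $H_n(z)=\exp\!\big(\tfrac{\lambda}{n}(H(z)-1)\big)$, which is itself a compound Poisson PGF (rate $\lambda/n$, same summand law) and hence a valid PGF by Definition \ref{def:compound}, and observe that $H_n(z)^n=G_Y(z)$. Thus $G_Y$ is DID.

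For the converse, suppose $G=G_X$ is DID, so for each $n$ there is a PGF $H_n$ with $G(z)=H_n(z)^n$. First I would establish $G(0)>0$: if $r\geq 1$ were the least index carrying positive mass, then matching lowest-order terms in $G(z)=H_n(z)^n$ would force $H_n$ to have least index $r/n$, which is impossible for $n>r$; hence $r=0$. Because the coefficients of $G$ are nonnegative and $G(0)>0$, we have $G(z)\geq G(0)>0$ on $[0,1]$, so $\phi(z):=\log G(z)$ is well-defined there and analytic in a neighborhood of $0$; since each $H_n$ is real and nonnegative on $[0,1]$, uniqueness of the positive root forces $H_n(z)=\exp(\phi(z)/n)$.

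The core of the argument is to show that the nonconstant Taylor coefficients of $\phi$ are nonnegative and summable. Writing $H_n(z)=\sum_{k\geq 0}p_k^{(n)}z^k$ and using $\sum_k p_k^{(n)}=1$, I would form
\[ n\big(H_n(z)-1\big)=\sum_{k\geq 1} n\,p_k^{(n)}\,(z^k-1). \]
On the other side, $n\big(H_n(z)-1\big)=n\big(e^{\phi(z)/n}-1\big)$, and since $n(e^{w/n}-1)=w+O(1/n)$ uniformly for $w$ in any bounded set, this converges to $\phi(z)$ uniformly on a small closed disk about $0$ on which $\phi$ is bounded analytic. Uniform convergence of analytic functions on a disk transfers to convergence of Taylor coefficients via Cauchy's integral formula, so for each $k\geq 1$ the coefficient $n\,p_k^{(n)}$ converges to $c_k$, the coefficient of $z^k$ in $\phi$; as $n\,p_k^{(n)}\geq 0$, we conclude $\omega_k:=c_k\geq 0$.

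It remains to package this as a compound Poisson law. Evaluating at $z=1$ gives $\phi(1)=0$, and because $c_k\geq 0$ for $k\geq 1$ Abel's theorem yields $\sum_{k\geq 1}\omega_k=-c_0=-\log G(0)<\infty$. Setting $\lambda=\sum_k\omega_k$ and $H(z)=\lambda^{-1}\sum_{k\geq 1}\omega_k z^k$ produces a valid PGF with $H(0)=0$ and $G(z)=\exp(\phi(z))=\exp\big(\lambda(H(z)-1)\big)$, exhibiting $X$ as compound Poisson in the sense of Definition \ref{def:compound} (cf. Equation \ref{eq:cpd-poi-pgf-expanded}). The main obstacle is the third step: rigorously justifying the interchange of the limit $n\to\infty$ with coefficient extraction and confirming that each limiting coefficient inherits nonnegativity. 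This is precisely the point requiring the analytic machinery (uniform convergence together with Cauchy's formula) rather than purely formal power-series manipulation, so I would take care to fix a disk on which $\phi$ is genuinely analytic and bounded before passing to coefficients.
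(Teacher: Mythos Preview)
The paper does not prove this lemma; it is stated with citations to Feller and to Steutel and van Harn and then used as a black box, so there is no in-paper argument to compare against. Your outline follows a standard route and is essentially sound, but one step is underjustified. After extracting the Taylor coefficients $c_k$ of $\phi=\log G$ on a \emph{small} disk and proving $c_k\geq 0$ for $k\geq 1$, you write ``Evaluating at $z=1$ gives $\phi(1)=0$, and because $c_k\geq 0$ \dots\ Abel's theorem yields $\sum_{k\geq 1}\omega_k=-c_0$.'' This tacitly assumes that the Taylor expansion of $\phi$ at $0$ converges on $[0,1)$ and represents $\phi$ there; without that, Abel's theorem is inapplicable, since $\phi(1)=0$ is a statement about the function $\log G$, not about your power series. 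A priori the radius of convergence of $\sum c_k z^k$ could be limited by complex zeros of $G$ inside the unit disk.

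The gap is easy to close. One clean fix: from $n\,p_k^{(n)}\to c_k$ and $\sum_{k\geq 1} n\,p_k^{(n)}=n\big(1-G(0)^{1/n}\big)\to -\log G(0)$, Fatou's lemma gives $\sum_{k\geq 1}c_k\leq -\log G(0)<\infty$. Then $\psi(z):=c_0+\sum_{k\geq 1}c_k z^k$ is analytic on $|z|<1$ and continuous on $|z|\leq 1$; since $e^{\psi}=G$ near $0$, the identity theorem gives $e^{\psi}=G$ on the whole open disk, and letting $z\uparrow 1$ along the reals yields $\psi(1)=0$, hence $\sum_{k\geq 1} c_k=-c_0$ exactly. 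Alternatively, invoke Pringsheim's theorem: with $c_k\geq 0$, the nearest singularity of the sum must lie on the positive real axis, yet $\log G$ is analytic in a complex neighborhood of every point of $[0,1)$ because $G$ is analytic and strictly positive there, forcing the radius of convergence to be at least $1$. Either patch completes your argument. (You should also handle the degenerate case $G\equiv 1$ separately, since then $\lambda=0$ and your formula $H(z)=\lambda^{-1}\sum\omega_k z^k$ is undefined.)
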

This suggests that only compound Poisson distributions may serve as limits of (suitably normalized) sums of other count variables. 


\section{Discrete stable distributions}
\subsection{Stable distributions}
A characterization of stable RVs \cite{nolan:2018,samorodnitskyStableNonGaussianRandom1994} is that their distributions are preserved, up to a location-scale transformation, under convolution.
\begin{definition}\cite{nolan:2018}
\label{def:stable}
A RV $X$ with distribution $F$ is \textit{stable} if for any $a,b>0$ and $X_1,X_2\sim F$ with $X_1$ independent of $X_2$, there exists $c>0$ and $d\in\mathbb{R}$ such that 
\[a X_1 + b X_2 \overset{d}{=} c X + d\]
$X$ is \textit{strictly stable} if the equality holds with $d=0$ for all $a,b$. If $X$ is stable but not strictly stable, we call it \textit{broadly stable}. 
\end{definition}
The equality in distribution in this definition is typically verified using characteristic functions.

\begin{definition}\cite{nolan:2018}
\label{def:stable-cf}
A RV $X\sim \mathcal{S}(\alpha,\beta,\sigma,\delta)$ is stable\footnote{This is Nolan's ``1-parameterization'' with $\sigma$ in place of his $\gamma$. We omit the 1 since we do not use any of his other parameterizations.} if and only if it has the following characteristic function:
\begin{equation*}
  \E\left[e^{itX}\right] =
    \begin{cases}
      \exp\left[it\delta - \sigma^\alpha\vert t \vert^\alpha\left(1-i\beta\sign(t)\tan\frac{\alpha\pi}{2}\right)\right] & \alpha\neq 1\\
      \exp\left[it\delta - \sigma\vert t \vert \left(1+i\beta\frac{2}{\pi}\sign(t)\log\vert t\vert\right)\right] & \alpha=1.
    \end{cases}
\end{equation*}
where the parameters are $\delta\in \mathbb{R}$ for location, $\beta\in[-1,1]$ for skewness, and $\alpha\in (0,2]$ as the index parameter. The scale parameter is constrained to $\sigma\geq 0$ for $\alpha=1$ and $\sigma>0$ otherwise. $X$ is \textit{strictly stable} if and only if either $\alpha\neq 1$ and $\delta=0$ or $\alpha=1$ and $\sigma\beta=0$.
\end{definition}
The constraint on $\sigma$ ensures that degenerate distributions with $\sigma=0$ are considered to have $\alpha=1$. A proof of the equivalence of Definitions \ref{def:stable} and \ref{def:stable-cf} is provided by \cite{nolan:2018}. The case of $\alpha=2$ is the Gaussian family. Smaller values of $\alpha$ lead to heavier tails. A nondegenerate stable RV with $\alpha<2$ has infinite variance, and with $\alpha\leq 1$ has an undefined mean.

Using Definition \ref{def:stable-cf}, we can equivalently express \ref{def:stable} in a more explicit form.
\begin{lemma}
RV $X\sim F$ is stable if and only if, for any $\rho\in (0,1)$, $\alpha \in (0,2]$, and independent $X_1,X_2\sim F$, there exists $\mu\in\mathbb{R}$ such that 
\[\rho X_1 + (1-\rho^\alpha )^{1/\alpha} X_2 \overset{d}{=} X+\mu\] 
$X$ is strictly stable if and only if $\mu=0$ for all $\rho$. 
\end{lemma}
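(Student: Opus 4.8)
The plan is to recognize the displayed identity as nothing more than Definition \ref{def:stable} rewritten under the substitution $a=\rho$, $b=(1-\rho^\alpha)^{1/\alpha}$, $c=1$, where $\alpha$ is the index of $X$. The crucial ingredient is the standard scaling relation for stable laws: whenever $aX_1+bX_2\overset{d}{=}cX+d$ with $X$ stable of index $\alpha$, the scale must satisfy $c=(a^\alpha+b^\alpha)^{1/\alpha}$ (read as $a+b$ when $\alpha=1$). First I would establish this from the characteristic function $\phi$ of Definition \ref{def:stable-cf}. Writing $\psi=\log\phi$ and using independence, $\log\E[e^{it(aX_1+bX_2)}]=\psi(at)+\psi(bt)$, while $\log\E[e^{it(cX+d)}]=itd+\psi(ct)$. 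For $a,b>0$ one has $\sign(at)=\sign(t)$ and $|at|^\alpha=a^\alpha|t|^\alpha$, so equating the coefficient of $|t|^\alpha$ (the scale-carrying part) forces $\sigma^\alpha(a^\alpha+b^\alpha)=\sigma^\alpha c^\alpha$, i.e.\ $c=(a^\alpha+b^\alpha)^{1/\alpha}$ since $\sigma>0$; the $\alpha=1$ branch is handled identically, with the $\log|\cdot|$ term affecting only the location and leaving $c=a+b$.

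For the forward direction, suppose $X$ is stable with index $\alpha$. Fix $\rho\in(0,1)$ and put $a=\rho$, $b=(1-\rho^\alpha)^{1/\alpha}$. Definition \ref{def:stable} supplies $c,d$ with $aX_1+bX_2\overset{d}{=}cX+d$, and the scaling relation yields $c=(\rho^\alpha+(1-\rho^\alpha))^{1/\alpha}=1$. Hence $\rho X_1+(1-\rho^\alpha)^{1/\alpha}X_2\overset{d}{=}X+d$, so $\mu:=d$ works. If $X$ is strictly stable then $d=0$ in Definition \ref{def:stable}, forcing $\mu=0$ for every $\rho$.

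For the reverse direction, suppose the displayed identity holds for the index $\alpha$ and every $\rho\in(0,1)$. Given arbitrary $a,b>0$, set $c=(a^\alpha+b^\alpha)^{1/\alpha}$ and $\rho=a/c$; since $b>0$ we have $\rho\in(0,1)$, and because $c^\alpha-a^\alpha=b^\alpha$ a direct check gives $(1-\rho^\alpha)^{1/\alpha}=b/c$. Applying the identity at this $\rho$ gives $(a/c)X_1+(b/c)X_2\overset{d}{=}X+\mu$, and because equality in distribution is preserved under the affine map $x\mapsto cx$, multiplying through by $c$ yields $aX_1+bX_2\overset{d}{=}cX+c\mu$, which is exactly Definition \ref{def:stable} with $d=c\mu$. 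Thus $X$ is stable, and if $\mu=0$ for all $\rho$ then $d=c\mu=0$ for all $a,b$, so $X$ is strictly stable.

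The only genuine work is the scaling relation $c=(a^\alpha+b^\alpha)^{1/\alpha}$, together with keeping the $\alpha=1$ case (with its logarithmic location term) and the degenerate $\sigma=0$ case straight; everything else is bookkeeping of the substitution. A secondary subtlety worth flagging explicitly is the quantifier on $\alpha$: in both directions $\alpha$ must be the actual tail index of $F$ (for any other value the identity would force $c\neq 1$), so I would state at the outset that $\alpha$ denotes the index of $X$ rather than an arbitrary element of $(0,2]$.
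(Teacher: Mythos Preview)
Your proposal is correct and follows essentially the same route as the paper: both directions hinge on the characteristic-function form in Definition \ref{def:stable-cf} to identify the scale as $c=(a^\alpha+b^\alpha)^{1/\alpha}$, and your reverse direction (set $c=(a^\alpha+b^\alpha)^{1/\alpha}$, $\rho=a/c$, multiply through by $c$) is identical to the paper's. The only cosmetic difference is that the paper computes the CF of $\rho X_1+(1-\rho^\alpha)^{1/\alpha}X_2$ directly and reads off $\mu$ explicitly (separately for $\alpha\neq 1$ and $\alpha=1$), whereas you first isolate the scaling relation and then specialize to $c=1$; your remark that $\alpha$ must be the actual index of $F$ is a useful clarification the paper leaves implicit.
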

\begin{proof}
Case of $\alpha\neq 1$. We first assume that $X$ is stable according to Definition $\ref{def:stable-cf}$. The CF of $\rho X_1 + (1-\rho^\alpha)^{1/\alpha} X_2$ with $\rho \in (0,1)$ is given by
\[\exp\left[i\delta t\big(\rho+(1-\rho^\alpha)^{1/\alpha}\big) - \sigma^\alpha\vert t\vert^\alpha(\rho^\alpha+1-\rho^\alpha)\left(1-i\beta\sign(t)\tan\frac{\alpha\pi}{2}\right)\right]\]
The sign term is unchanged since $\rho+(1-\rho^\alpha)^{1/\alpha}>0$ for all $\rho\in (0,1)$. This is the CF of $X+\mu$ where $\mu=\delta\big((1-\rho^\alpha)^{1/\alpha}-(1-\rho)\big)$. If $X$ is strictly stable, $\delta=0$ which implies $\mu=0$ also.

Case of $\alpha=1$. The CF of $\rho X_1 + (1-\rho) X_2$ is given by
\begin{align*}
&\exp\left[it\delta(\rho+1-\rho) - \sigma\vert t \vert(\rho+1-\rho) - \sigma i\beta \frac{2}{\pi}t\left(\rho \log\vert \rho t\vert + (1-\rho) \log\vert (1-\rho) t\vert\right)\right]\\
&= \exp\left[it\left(\delta-\sigma\beta\frac{2}{\pi}\big(\rho\log\rho + (1-\rho)\log(1-\rho)\big)\right) - \sigma\vert t \vert - \sigma i\beta\frac{2}{\pi} t \left(\rho \log\vert t\vert + (1-\rho) \log\vert t\vert\right)\right]\\
&= \exp\left[it\left(\delta-\sigma\beta\frac{2}{\pi}\big(\rho\log\rho + (1-\rho)\log(1-\rho)\big)\right) - \sigma\vert t \vert\left(1 + i\beta\frac{2}{\pi} \sign(t)\log\vert t\vert\right)\right]
\end{align*}
This is the CF of $X+\mu$ where $\mu=-\sigma\beta\frac{2}{\pi}\big(\rho\log\rho + (1-\rho)\log(1-\rho)\big)$. If $X$ is strictly stable, then $\sigma\beta=0$ implying $\mu=0$ also.

To show these together imply Definition \ref{def:stable} for $\alpha\in (0,2]$, set $c=(a^\alpha+b^\alpha)^{1/\alpha}$ and $\rho=a/c$ so that $(1-\rho^\alpha)^{1/\alpha}=b/c$. Therefore $a X_1 + b X_2 = c\rho X_1 + c(1-\rho^\alpha)^{1/\alpha} X_2 $ has the same distribution as $c X + c\mu$ so we can set $d=c\mu$. For strict stability, if $\mu=0$ then $d=0$ also. For $\alpha\neq 1$, $d = \delta(a+b-c)$. For $\alpha=1$, $c=a+b$ and
\begin{align*}
d &= -\sigma\beta\left(a\log\frac{a}{c} + b\log\frac{b}{c}\right)\\
&= \sigma\beta\big(c\log c - a\log a - b\log b\big)
\end{align*}
\end{proof}

The BLT does not exist for most non-Gaussian stable distributions because they have heavy tails on both sides. However, in the special case of $\beta=1$ (maximally skewed to the right), the left tail becomes subexponential \cite{nolan:2018} and the BLT is given \cite{guptaMultiscalingPropertiesSpatial1990,samorodnitskyStableNonGaussianRandom1994} by
\begin{equation}
\label{eq:stable-blt}
  \mathcal{L}_X(t) = \E[\exp(-tX)] = 
    \begin{cases}
      \exp\left(-t\delta-\sec\left(\frac{\pi\alpha}{2}\right)\sigma^\alpha t^\alpha\right) & \alpha\neq 1\\
      \exp\left(-t\delta+\sigma\frac{2}{\pi}t\log(t)\right) & \alpha=1.
    \end{cases}
\end{equation}
which is finite whenever $t\geq 0$. Note that $\sec\frac{\pi\alpha}{2}$ is positive if $0<\alpha<1$ and negative if $1<\alpha\leq 2$. Since this special case of stable distributions will show up repeatedly in this work, we adopt the following notation for convenience.

\begin{definition}
\label{def:extreme-stable}
If $X\sim \mathcal{S}(\alpha,\beta=1,\sigma,\delta)$ as in Definition \ref{def:stable-cf} we say it follows the \textit{extreme stable} distribution and write $X\sim \mathcal{ES}(\alpha,\sigma,\delta)$.
\end{definition}

We use the notation $\mathcal{ES}(\alpha)$ to indicate the extreme stable family where $\sigma,\delta$ are unspecified or irrelevant. Note that the $\mathcal{N}(\mu,\sigma^2)$ distribution is equivalent to $\mathcal{ES}(2,\sigma/\sqrt{2},\mu)$. 

\subsection{Discrete stability}

Nondegenerate stable distributions in the sense of Definition \ref{def:stable} are absolutely continuous \cite{nolan:2018}. A discrete analog of strict stability was proposed by \cite{steutelDiscreteAnaloguesSelfDecomposability1979} by replacing scalar multiplication with a binomial thinning operation. Unfortunately, the strictly discrete stable distributions only exist for $\alpha\in (0,1]$. Here we provide a more general definition that allows discrete stable distributions to exist for $\alpha\in (0,2]$.

\begin{definition}
\label{def:discrete-stable}
RV $X\sim F$ is \textit{discrete stable} if for any $\rho\in (0,1)$, $\alpha>0$, and independent $X_1,X_2\sim F$, there exists $\mu\in\mathbb{R}$ such that 
\[\rho\circ X_1 + (1-\rho^\alpha )^{1/\alpha}\circ X_2 \overset{d}{=} X\oplus\mu\]
$X$ is \textit{strictly discrete stable} if the equality holds with $\mu=0$ for all $\rho$. If $X$ is discrete stable but not strictly discrete stable, we call it \textit{broadly discrete stable}. 
\end{definition}
This definition may be established using either the FCGF
\begin{equation}
\label{eq:discrete-stable-fcgf-relationship}
C(t;X)+\mu t = C(\rho t; X)+C\big((1-\rho^\alpha)^{1/\alpha}t;X\big)
\end{equation}
or the PGF
\begin{equation}
\label{eq:discrete-stable-pgf-relationship}
G(z)\exp(\mu (z-1)) = G(1-\rho (1-z))G\big(1-(1-\rho^\alpha)^{1/\alpha}(1-z)\big)
\end{equation}

\begin{corollary}
\label{cor:zeroprob-discrete-stable}
If $X$ is discrete stable then $\Pr(X=0)>0$.
\end{corollary}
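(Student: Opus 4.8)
The plan is to read the result directly off the probability generating function characterization in Equation \ref{eq:discrete-stable-pgf-relationship}, evaluated at the single point $z=0$. Writing $G$ for the PGF of $X$ and recalling that $G(0)=\Pr(X=0)$, setting $z=0$ in that identity yields
\[
G(0)\exp(-\mu) = G(1-\rho)\,G\bigl(1-(1-\rho^\alpha)^{1/\alpha}\bigr).
\]
Since $\exp(-\mu)>0$ for every real $\mu$, it suffices to show that the right-hand side is strictly positive for some admissible $\rho$ (indeed any will do), as this forces $G(0)>0$.

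The key step is the observation that the PGF of a count variable is strictly positive on the open interval $(0,1)$. Indeed, $G(w)=\sum_{n\ge 0} w^n\Pr(X=n)$ is a power series with nonnegative coefficients summing to one, so for $w\in(0,1)$ every term is nonnegative and at least one is strictly positive, forcing $G(w)>0$. It then remains only to check that both arguments $1-\rho$ and $1-(1-\rho^\alpha)^{1/\alpha}$ land inside $(0,1)$. For $\rho\in(0,1)$ we immediately have $1-\rho\in(0,1)$; and since $\alpha>0$ gives $\rho^\alpha\in(0,1)$, hence $(1-\rho^\alpha)^{1/\alpha}\in(0,1)$, the second argument lies in $(0,1)$ as well. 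Consequently both factors on the right are strictly positive, so $G(0)\exp(-\mu)>0$, and therefore $\Pr(X=0)=G(0)>0$.

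I do not anticipate a genuine obstacle here: the entire content is the positivity of a PGF on $(0,1)$ together with the bookkeeping that the thinning factors $1-\rho$ and $1-(1-\rho^\alpha)^{1/\alpha}$ stay strictly between $0$ and $1$. The only conceptual point worth flagging is that $X$ must be understood as a genuine count variable — this is implicit in Definition \ref{def:discrete-stable}, since the dilation $\rho\circ X$ is defined only for count variables — which is precisely what guarantees $G$ has the nonnegative-coefficient power series form used above.
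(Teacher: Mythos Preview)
Your proof is correct and follows essentially the same approach as the paper: evaluate Equation~\ref{eq:discrete-stable-pgf-relationship} at $z=0$, observe that both arguments $1-\rho$ and $1-(1-\rho^\alpha)^{1/\alpha}$ lie in $(0,1)$, and use that a PGF is strictly positive on $(0,1)$. The paper phrases this as a proof by contradiction (assuming $G(0)=0$ forces $G$ to vanish somewhere in $(0,1)$), whereas you argue directly, but the logical content is identical.
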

\begin{proof}
Suppose $X$ is discrete stable with PGF $G(z)$ and $\Pr(X=0)=G(0)=0$. Then by Equation \ref{eq:discrete-stable-pgf-relationship}, for $\rho\in (0,1)$,
\[G(1-\rho)G\big(1-(1-\rho^\alpha)^{1/\alpha}\big)=0\]
This implies $G(z)=0$ for some $z\in (0,1)$, which contradicts the fact that $G(z)$ is a PGF.
\end{proof}

\subsection{Mixed Poisson-stable as unique discrete stable}

\begin{proposition} \textit{Mixed Poisson-stable family}\cite{townesMixedPoissonFamilies2024}
\label{prop:mpoi-stable}

Let $X\sim \mathcal{ES}(\alpha,\sigma,\delta)$ with BLT as in Equation \ref{eq:stable-blt}. If the location parameter $\delta$ satisfies the following constraint, then $X$ can be used as the mixing parameter in a mixed Poisson distribution. 
\begin{equation}
\label{eq:delta-lim}
\delta \geq 
\begin{cases}
-\alpha \sec\left(\frac{\pi\alpha}{2}\right)\sigma^\alpha & \alpha\neq 1\\
\sigma\frac{2}{\pi} & \alpha=1
\end{cases}
\end{equation}
The PGF of the resulting $Poi\bigwedge \mathcal{ES}$ family is
\begin{equation}
\label{eq:mpoi-stable-pgf}
G(z)=
  \begin{cases}
    \exp\left((z-1)\delta-\sec\left(\frac{\pi\alpha}{2}\right)\sigma^\alpha(1-z)^\alpha\right) & \alpha\neq 1\\
    \exp\left((z-1)\delta+\sigma\frac{2}{\pi}(1-z)\log(1-z)\right) & \alpha=1.
  \end{cases}
\end{equation}
\end{proposition}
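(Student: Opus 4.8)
The plan is to separate the two assertions and spend essentially all the effort on existence. The PGF formula is immediate: by Lemma~\ref{lem:pgf-mpoi}, whenever the mixture exists its PGF equals $\mathcal{L}_X(1-z)$, and substituting $t=1-z$ into the extreme-stable BLT of Equation~\ref{eq:stable-blt} (using $-(1-z)\delta=(z-1)\delta$) reproduces Equation~\ref{eq:mpoi-stable-pgf} in both the $\alpha\ne1$ and $\alpha=1$ branches. So the entire content lies in showing that the mixture exists under the stated constraint on $\delta$.

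For existence I would invoke Proposition~\ref{prop:valid-blt}: the mixture $Poi\bigwedge\mathcal{ES}$ exists iff $\mathcal{L}_X$ is completely monotone on $[0,1]$, which under the substitution $t=1-z$ is exactly the statement that the candidate $G(z)=\mathcal{L}_X(1-z)$ of Equation~\ref{eq:mpoi-stable-pgf} satisfies the conditions of Lemma~\ref{lem:valid-pgf}. For $\alpha\le1$ the extreme-stable law is nonnegative and infinitely divisible, so Lemma~\ref{lem:did-mpoi} would already settle the matter; the real difficulty is $\alpha\in(1,2]$, where $X$ is supported on all of $\mathbb{R}$, so I want one argument covering the whole range. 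I would obtain it by exhibiting $G$ \emph{directly} as a compound Poisson PGF: expand $\log G(z)=\sum_{n\ge0}c_n z^n$ about $z=0$ and prove $c_n\ge0$ for every $n\ge1$. Since $\log G(1)=0$ forces $c_0=-\sum_{n\ge1}c_n$, setting $\omega_n=c_n\ge0$ yields $G(z)=\prod_{n\ge1}\exp(\omega_n(z^n-1))$, matching Equation~\ref{eq:cpd-poi-pgf-expanded}; by Definition~\ref{def:compound} this is manifestly a valid PGF (indeed DID, by Lemma~\ref{lem:cpoi-did}). Then $\mathcal{L}_X$ is completely monotone on $[0,1]$, Proposition~\ref{prop:valid-blt} gives existence, and Lemma~\ref{lem:pgf-mpoi} confirms the PGF is $G$.

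The crux is the sign analysis of the $c_n$. For $\alpha\ne1$, writing $c=\sec(\frac{\pi\alpha}{2})\sigma^\alpha$ and using $(1-z)^\alpha=\sum_{n\ge0}\binom{\alpha}{n}(-1)^n z^n$ gives $c_0=-\delta-c$, $c_1=\delta+\alpha c$, and $c_n=-c(-1)^n\binom{\alpha}{n}$ for $n\ge2$. The hard part will be the sign of $(-1)^n\binom{\alpha}{n}$ as $\alpha$ crosses $1$: for $\alpha\in(0,1)$ the falling factorial has one positive and $n-1$ negative factors, making it negative, while $c>0$; for $\alpha\in(1,2)$ it has two positive and $n-2$ negative factors, making it positive, while $\sec(\frac{\pi\alpha}{2})$—and hence $c$—has flipped sign to negative. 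In both regimes the two sign flips cancel, so $c_n>0$ automatically for all $n\ge2$, and the only binding condition is $c_1\ge0$, i.e. $\delta\ge-\alpha\sec(\frac{\pi\alpha}{2})\sigma^\alpha$, which is precisely constraint~\ref{eq:delta-lim}. This cancellation across $\alpha=1$ is the step I expect to demand the most care.

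For $\alpha=1$ I would expand $(1-z)\log(1-z)=-z+\sum_{n\ge2}\frac{z^n}{n(n-1)}$ from $\log(1-z)=-\sum_{n\ge1}z^n/n$, yielding $c_1=\delta-\frac{2}{\pi}\sigma$ and $c_n=\frac{2}{\pi}\sigma\frac{1}{n(n-1)}\ge0$ for $n\ge2$, so the single constraint $\delta\ge\frac{2}{\pi}\sigma$ again emerges. It then remains to clear routine loose ends: $G(1)=1$; continuity of $G$ on $[0,1]$, whose only delicate point is $z=1$ in the $\alpha=1$ branch, handled by $\lim_{u\to0^+}u\log u=0$; and finiteness of $\lambda=\sum_{n\ge1}\omega_n$, which holds because $(-1)^n\binom{\alpha}{n}=O(n^{-\alpha-1})$ is summable for $\alpha>0$. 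With $c_n\ge0$ and $\lambda<\infty$ established, the compound Poisson representation certifies $G$ as a valid PGF and the chain through Proposition~\ref{prop:valid-blt} delivers existence, completing both claims.
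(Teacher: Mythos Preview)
Your proof is correct. The paper itself does not prove Proposition~\ref{prop:mpoi-stable}; it is imported wholesale from the cited companion paper \cite{townesMixedPoissonFamilies2024}, so there is no in-paper argument to compare against directly.

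That said, your route is worth commenting on because it anticipates machinery the paper develops \emph{later} for a different purpose. Your expansion of $\log G(z)$ and verification that the power-series coefficients $c_n$ are nonnegative is exactly the computation underlying Proposition~\ref{prop:bsib} and Theorem~\ref{thm:dstable-cpoi}: there the paper writes $H(z)=(1/\lambda)\log G(z)+1$ and checks absolute monotonicity of $H$ derivative-by-derivative, which amounts to the same sign analysis of $(-1)^n\binom{\alpha}{n}$ and the same emergence of $c_1\ge0$ as the unique binding constraint. The paper uses this to show that discrete stable distributions are DID, \emph{having already assumed} existence via the cited Proposition~\ref{prop:mpoi-stable}; you instead use the compound Poisson representation to \emph{establish} existence in the first place, short-circuiting the need for the external reference. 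Your approach therefore makes the paper more self-contained and reveals that Proposition~\ref{prop:bsib}/Theorem~\ref{thm:dstable-cpoi} are not merely consequences of Proposition~\ref{prop:mpoi-stable} but could serve as its proof. One minor wording issue: when you say the falling factorial is ``negative'' or ``positive'' you mean $(-1)^n\binom{\alpha}{n}$, not $\binom{\alpha}{n}$ itself, whose sign alternates with $n$; the conclusion is correct but the phrasing could be tightened.
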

The term ``discrete stable'' was originally used by \cite{steutelDiscreteAnaloguesSelfDecomposability1979}. We now show that Proposition \ref{prop:mpoi-stable} generalizes their result, and in fact is comprehensive of discrete stable distributions under Definition \ref{def:discrete-stable}.

\begin{theorem}
\label{thm:discrete-stable-pgf}
A RV $X\sim \mathcal{DS}(\alpha,\gamma,\delta)$ has a discrete stable distribution if and only if its PGF satisfies
\begin{equation}
\label{eq:discrete-stable-pgf}
G(z)=
  \begin{cases}
    \exp\left((z-1)\delta+\gamma(1-z)^\alpha\right) & \alpha\neq 1\\
    \exp\left((z-1)\delta+\gamma(1-z)\log(1-z)\right) & \alpha=1.
  \end{cases}
\end{equation}
with index parameter $\alpha\in(0,2]$. The dilation parameter $\gamma$ is constrained by
\begin{equation}
\label{eq:dilation-constr}
\gamma
  \begin{cases}
    <0 & \alpha\in (0,1)\\
    \geq 0 & \alpha=1\\
    >0 & \alpha\in (1,2].
  \end{cases}
\end{equation}
The translation parameter $\delta$ is constrained by $\delta\geq \alpha\gamma$.
$X$ is strictly discrete stable if and only if either $\alpha\in (0,1)$ and $\delta=0$ or $\alpha=1$ and $\gamma=0$.
\end{theorem}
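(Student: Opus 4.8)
\section*{Proof proposal}

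The plan is to prove both directions through the factorial cumulant generating function (FCGF). By Corollary \ref{cor:zeroprob-discrete-stable} any discrete stable $X$ has $\Pr(X=0)>0$, so $C(t)=C(t;X)=\log G(t+1)$ is defined, and since $G$ is a PGF with $G\ge G(0)>0$ on $[0,1]$, the function $C$ is real-analytic on $(-1,0)$. Writing $a=\rho$ and $b=(1-\rho^\alpha)^{1/\alpha}$, so that $a^\alpha+b^\alpha=1$ and $a,b\in(0,1)$, the additive and dilation rules for the FCGF turn Definition \ref{def:discrete-stable} into the functional equation
\[C(at)+C(bt)=C(t)+\mu t,\qquad a^\alpha+b^\alpha=1,\]
required to hold for every $\rho\in(0,1)$ at the fixed index $\alpha$ (this is Equation \ref{eq:discrete-stable-fcgf-relationship}). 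Everything reduces to solving this equation on $(-1,0)$, where the scalings $at,bt$ stay in range because $a,b<1$.

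For sufficiency I would simply substitute the candidate FCGF $C(t)=\gamma(-t)^\alpha+\delta t$ (respectively $C(t)=\gamma(-t)\log(-t)+\delta t$ when $\alpha=1$), which corresponds via $t=z-1$ to the PGF in Equation \ref{eq:discrete-stable-pgf}. Using $a^\alpha+b^\alpha=1$, the power (or $\log$) term reproduces itself while the linear term contributes $\delta(a+b)t$, so the equation holds with $\mu=\delta(a+b-1)$ for $\alpha\neq1$ and $\mu=-\gamma\big(\rho\log\rho+(1-\rho)\log(1-\rho)\big)$ for $\alpha=1$. These explicit expressions also settle strictness: demanding $\mu=0$ for all $\rho$ forces $\delta=0$ when $\alpha\neq1$ (since $a+b\neq1$ there) and $\gamma=0$ when $\alpha=1$; combined with the constraint $\delta\ge\alpha\gamma$ established below, which makes $\delta>0$ whenever $\alpha>1$ because then $\gamma>0$, the only strictly discrete stable cases are $\alpha\in(0,1),\ \delta=0$ and $\alpha=1,\ \gamma=0$.

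The necessity direction is the crux. Differentiating the functional equation twice in $t$ annihilates $\mu t$ and yields the homogeneous relation $a^2C''(at)+b^2C''(bt)=C''(t)$ for all admissible $a,b$. Setting $u=-t>0$, $g(u)=C''(-u)$, and $k(w)=w^{2-\alpha}g(w)$ converts this into $a^\alpha k(au)+b^\alpha k(bu)=k(u)$, i.e.\ $k(u)$ is the $(a^\alpha,b^\alpha)$-weighted mean of $k(au)$ and $k(bu)$. Parametrizing $a^\alpha=\theta$, $b^\alpha=1-\theta$ and differentiating in $\theta$ (the right side being $\theta$-free) shows that $\Phi(w):=k(w)+\tfrac{w}{\alpha}k'(w)$ satisfies $\Phi(\theta^{1/\alpha}u)=\Phi((1-\theta)^{1/\alpha}u)$; since any $w_1,w_2>0$ arise as $\theta^{1/\alpha}u,(1-\theta)^{1/\alpha}u$ for a suitable $(\theta,u)$, $\Phi$ is constant. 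Solving the resulting first-order ODE gives $k(w)=\mathrm{const}+c_0w^{-\alpha}$, and substituting back into the weighted-mean identity forces $c_0=0$. Hence $C''(t)=c\,(-t)^{\alpha-2}$; integrating twice, using $C(0)=0$ to remove the additive constant and absorbing the remaining linear freedom into $\delta$, recovers exactly the two cases of Equation \ref{eq:discrete-stable-pgf}, with the $\log$ appearing for $\alpha=1$ purely from antidifferentiating $(-t)^{-1}$.

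Finally I would pin down the admissible parameters and the bound $\alpha\le2$. Having shown $G(z)=\exp\big((z-1)\delta+\gamma(1-z)^\alpha\big)$ (resp.\ the $\alpha=1$ form), this is literally the mixed Poisson-stable PGF of Equation \ref{eq:mpoi-stable-pgf} under the identification $\gamma=-\sec(\tfrac{\pi\alpha}{2})\sigma^\alpha$ (resp.\ $\gamma=\sigma\tfrac2\pi$). Proposition \ref{prop:mpoi-stable} then certifies that $G$ is a valid PGF precisely when $\alpha\in(0,2]$ and $\delta\ge\alpha\gamma$, and the sign of $\sec(\tfrac{\pi\alpha}{2})$ across the sub-ranges of $\alpha$ produces exactly the sign constraints of Equation \ref{eq:dilation-constr}; for $\alpha>2$ no such PGF exists. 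I expect the main obstacle to be the necessity argument, specifically ruling out non-power-law (e.g.\ log-periodic) solutions of the homogeneous equation: this is why imposing the identity for the entire one-parameter family of $(a,b)$ rather than just $a=b$, together with the differentiation-in-$\theta$ step that pins $\Phi$ down to a constant, is essential.
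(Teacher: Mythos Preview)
Your sufficiency and strictness arguments match the paper's. Your necessity argument for the \emph{form} of the FCGF is correct but genuinely different: the paper follows Steutel--van Harn and takes a $\rho\uparrow 1$ limit in the PGF relation to extract an ODE for $G'(z)/G(z)$ directly, whereas you differentiate the FCGF relation twice in $t$ to kill the unknown $\mu$, land on the weighted-mean equation $a^\alpha k(au)+b^\alpha k(bu)=k(u)$, and then differentiate in the parameter $\theta=a^\alpha$ to force $k$ constant. Your route is tidier in that it handles $\alpha\neq1$ and $\alpha=1$ uniformly at the level of $C''$, the case split appearing only upon integration; the paper must track the two cases separately throughout. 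One small caveat: when you write ``any $w_1,w_2>0$ arise,'' in fact $u=-t\in(0,1)$ forces $w_1^\alpha+w_2^\alpha<1$, but this still suffices to make $\Phi$ constant on $(0,1)$, which is all you need.

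Your derivation of the parameter constraints, however, has a real gap. Proposition~\ref{prop:mpoi-stable} is stated only as a \emph{sufficiency} result (``if $\delta$ satisfies~\eqref{eq:delta-lim}, then \ldots''), so invoking it to certify validity ``precisely when'' overstates what it gives. More concretely, the identification $\gamma=-\sec(\tfrac{\pi\alpha}{2})\sigma^\alpha$ with $\sigma>0$ already presupposes that $\gamma$ carries the correct sign; for $\gamma$ of the wrong sign there is no extreme stable mixing distribution to appeal to, and you still owe an argument that $G$ then fails to be a PGF. Likewise, ``for $\alpha>2$ no such PGF exists'' cannot be read off Proposition~\ref{prop:mpoi-stable}, which simply does not speak to $\alpha>2$. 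The paper closes these gaps by direct computation on $G$: $G'(0)\ge0$ gives $\delta\ge\alpha\gamma$; the sign of $\gamma$ is forced by the behaviour of $\E[X]=\lim_{z\uparrow1}G'(z)$ for $\alpha<1$ and of $\var[X]$ for $\alpha\in(1,2)$; $\alpha>2$ is excluded because it would force $\var[X]=\E[X]$, hence Poisson, contradicting $\gamma\neq0$; and $\alpha=2$ is handled via the Hermite constraints. You should supply these elementary checks rather than lean on Proposition~\ref{prop:mpoi-stable}.
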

The proof is deferred to Appendix \ref{sec:discrete-stable-pgf-proof}. The constraints ensure that $G(z)$ is a proper PGF and that Poisson distributions ($\gamma=0$) always have $\alpha=1$. 
We use the notation $\mathcal{DS}(\alpha)$ to indicate the discrete stable family where $\gamma,\delta$ are unspecified or irrelevant.

\begin{corollary}
\label{cor:ds-mpoi-stab}
The mixed Poisson-stable family of Proposition \ref{prop:mpoi-stable} is the unique discrete stable family under Definition \ref{def:discrete-stable}.
\end{corollary}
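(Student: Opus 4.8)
The plan is to observe that Theorem \ref{thm:discrete-stable-pgf} already gives a complete if-and-only-if characterization of the discrete stable PGFs, so the corollary reduces to matching the two explicit PGF families: the $Poi\bigwedge\mathcal{ES}$ form in Equation \ref{eq:mpoi-stable-pgf} and the $\mathcal{DS}(\alpha,\gamma,\delta)$ form in Equation \ref{eq:discrete-stable-pgf}. Both are parametrized by a common index $\alpha\in(0,2]$, a common location parameter $\delta$ appearing identically in the $(z-1)\delta$ term, and differ only in how the dilation term is written. First I would read off the reparametrization that identifies them: for $\alpha\neq 1$ set $\gamma=-\sec\left(\frac{\pi\alpha}{2}\right)\sigma^\alpha$, and for $\alpha=1$ set $\gamma=\sigma\frac{2}{\pi}$. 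Under these substitutions the two PGFs coincide as functions of $z$, so the families agree pointwise once the parameter ranges are shown to match.

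The substantive step is verifying that the map $\sigma\mapsto\gamma$ is a bijection between the admissible parameter sets. Here the key observation is the sign of $\sec\left(\frac{\pi\alpha}{2}\right)$: it is positive on $\alpha\in(0,1)$ and negative on $\alpha\in(1,2]$. Consequently, as $\sigma$ ranges over $(0,\infty)$, the value $\gamma=-\sec\left(\frac{\pi\alpha}{2}\right)\sigma^\alpha$ sweeps out exactly $(-\infty,0)$ when $\alpha\in(0,1)$ and $(0,\infty)$ when $\alpha\in(1,2]$, which is precisely the dilation constraint of Equation \ref{eq:dilation-constr}. For $\alpha=1$, allowing the degenerate scale $\sigma\geq 0$ makes $\gamma=\sigma\frac{2}{\pi}$ range over $[0,\infty)$, again matching $\gamma\geq 0$, with $\gamma=0$ recovering the Poisson case.

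Next I would check that the location constraint transports correctly. The mixed Poisson existence bound (Equation \ref{eq:delta-lim}) reads $\delta\geq -\alpha\sec\left(\frac{\pi\alpha}{2}\right)\sigma^\alpha$ for $\alpha\neq 1$ and $\delta\geq\sigma\frac{2}{\pi}$ for $\alpha=1$; substituting the reparametrization collapses both cases to $\delta\geq\alpha\gamma$, which is exactly the translation constraint in Theorem \ref{thm:discrete-stable-pgf}. Combining these observations, the reparametrization is a bijection between the full parameter space of the mixed Poisson-stable family and that of $\mathcal{DS}(\alpha,\gamma,\delta)$, and it carries one PGF family onto the other. Since Theorem \ref{thm:discrete-stable-pgf} shows the $\mathcal{DS}$ form exhausts all discrete stable distributions, this proves both that every mixed Poisson-stable law is discrete stable and that no other discrete stable laws exist, establishing uniqueness. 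I do not anticipate a genuine obstacle; the only point demanding care is the case analysis on the sign of the secant together with the degenerate $\sigma=0$ boundary at $\alpha=1$, which must be handled to confirm surjectivity onto the full admissible range of $\gamma$.
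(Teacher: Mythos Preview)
Your proposal is correct and follows exactly the paper's approach: equate the PGFs of Proposition \ref{prop:mpoi-stable} and Theorem \ref{thm:discrete-stable-pgf}, identify $\alpha$ and $\delta$ directly, and reparametrize $\sigma\mapsto\gamma$ via $\gamma=-\sec(\pi\alpha/2)\sigma^\alpha$ (resp.\ $\gamma=2\sigma/\pi$ for $\alpha=1$). You actually supply more detail than the paper does---the explicit check that the sign of $\sec(\pi\alpha/2)$ makes the map a bijection onto the admissible $\gamma$-range, and that the $\delta$-constraints coincide---whereas the paper simply states the parameter correspondence and leaves these verifications implicit.
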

\begin{proof}
Simply equate parameters of Theorem \ref{thm:discrete-stable-pgf} and Proposition \ref{prop:mpoi-stable}. The location parameter $\delta$ and index parameter $\alpha$ are the same in both PGFs. The dilation parameter $\gamma$ is related to the scale parameter $\sigma$ by
\begin{equation}
\label{eq:gamma-sigma}
\gamma = 
  \begin{cases}
    -\sec\left(\frac{\pi\alpha}{2}\right)\sigma^\alpha & \alpha\neq 1\\
    \sigma\frac{2}{\pi} & \alpha= 1.
  \end{cases}
\end{equation}
\end{proof}

\section{Infinite divisibility and discrete self-decomposability}

\subsection{Compound Poisson representation}
In the case of nonnegative, ID mixing distributions, the corresponding mixed Poisson family is discretely infinitely divisible (DID) \cite{grandellMixedPoissonProcesses2020}. Since stable distributions are ID, it is tempting to conclude that the discrete stable family must be DID. However, the Bernstein theory often used to prove this for nonnegative mixing distributions does not necessarily carry over to the real-valued situation here. We will instead use the fact that a count distribution is DID iff it has a compound Poisson representation (Lemma \ref{lem:cpoi-did}). Such a distribution has a PGF of the form $G(z)=\exp\left\{\lambda\big(H(z)-1\big)\right\}$ where $\lambda>0$ and $H(z)$ is a PGF satisfying $H(0)=0$. Since $G(z)$ is given by Equation \ref{eq:discrete-stable-pgf}, the summand distribution is obtained by $H(z)=(1/\lambda)\log G(z)+1$.

\begin{proposition} \textit{Broad Sibuya (bSib) distribution}

\label{prop:bsib}
The following function is a valid PGF of a count variable with support on $\mathbb{N}$.
\begin{equation}
\label{eq:bsib-pgf}
H(z)=
  \begin{cases}
    1-\big[\rho(1-z)+(1-\rho)(1-z)^\alpha\big] & \alpha\neq 1\\
    z + \rho(1-z)\log(1-z) & \alpha=1.
  \end{cases}
\end{equation}
The index parameter is constrained to $\alpha\in (0,2]$ and the shape parameter is constrained as follows:
\begin{equation}
\rho\in
  \begin{cases}
    \left[\frac{-\alpha}{1-\alpha},1\right) & \alpha\in (0,1)\\
    [0,1] & \alpha=1\\
    \left(1,\frac{\alpha}{\alpha-1}\right] & \alpha\in (1,2]
  \end{cases}
\end{equation}
\end{proposition}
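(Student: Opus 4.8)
The plan is to verify the three conditions of Lemma~\ref{lem:valid-pgf} together with the extra requirement $H(0)=0$, which forces the support onto $\mathbb{N}=\{1,2,\ldots\}$. Conditions (i)--(ii) and the support condition are immediate: evaluating each branch at $z=1$ gives $H(1)=1$ (using the convention $0\log 0=0$ for $\alpha=1$), evaluating at $z=0$ gives $H(0)=1-[\rho+(1-\rho)]=0$ for $\alpha\neq 1$ and $H(0)=0+\rho\log 1=0$ for $\alpha=1$, and each branch is continuous on $[0,1]$ since $w^\alpha$ and $w\log w$ extend continuously to $w=0$. All the substantive work therefore lies in absolute monotonicity, condition (iii): finite derivatives $H^{(k)}(z)\geq 0$ for every $k\geq 1$ and every $z\in(0,1)$.

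To organize the derivative computation I would substitute $w=1-z\in(0,1)$, so that $\frac{d^k}{dz^k}=(-1)^k\frac{d^k}{dw^k}$; note that $w^{\alpha-k}$ and $w^{1-k}$ are finite on $(0,1)$, blowing up only at the excluded endpoint $z=1$. For $\alpha\neq 1$ only the $w^\alpha$ term survives past the first derivative, and $\frac{d^k}{dw^k}w^\alpha=\alpha(\alpha-1)\cdots(\alpha-k+1)\,w^{\alpha-k}$ yields, for $k\geq 2$,
\[
H^{(k)}(z)=(-1)^{k+1}(1-\rho)\,\alpha(\alpha-1)\cdots(\alpha-k+1)\,w^{\alpha-k}.
\]
For $\alpha=1$ the analogous identity $\frac{d^k}{dw^k}(w\log w)=(-1)^k(k-2)!\,w^{1-k}$ for $k\geq 2$ gives $H^{(k)}(z)=\rho\,(k-2)!\,w^{1-k}\geq 0$ directly from $\rho\geq 0$. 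The remaining step for $k\geq 2$ is to track the sign of the falling factorial against that of $(1-\rho)$ in each regime. For $\alpha\in(0,1)$ the factors $\alpha-1,\ldots,\alpha-k+1$ are all negative, so the product carries sign $(-1)^{k-1}$, which combines with $(-1)^{k+1}$ and $1-\rho>0$ (from $\rho<1$) to make $H^{(k)}\geq 0$. For $\alpha\in(1,2]$ exactly $\alpha$ and $\alpha-1$ are positive, the product carries sign $(-1)^{k}$, which combines with $1-\rho<0$ (from $\rho>1$) to give $H^{(k)}\geq 0$; the endpoint $\alpha=2$ is harmless since the factorial vanishes for $k\geq 3$, reflecting that $H$ is then quadratic in $z$.

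The crux, and the step I expect to be the main obstacle, is the first derivative, which for $\alpha\neq 1$ equals $H'(z)=\rho+(1-\rho)\alpha\,w^{\alpha-1}$, a sum of a constant and a power term whose signs generally disagree; its nonnegativity is not automatic and is precisely what the bounds on $\rho$ are designed to guarantee. Since $w^{\alpha-1}$ is monotone in $w$, the infimum of $H'$ over $w\in(0,1]$ is attained at $w=1$, where $H'=\rho(1-\alpha)+\alpha$; substituting the extreme admissible values $\rho=-\alpha/(1-\alpha)$ (for $\alpha\in(0,1)$) and $\rho=\alpha/(\alpha-1)$ (for $\alpha\in(1,2]$) makes this expression vanish, so $H'(z)\geq 0$ throughout $(0,1)$ for every admissible $\rho$ and fails just outside the stated interval. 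The $\alpha=1$ case is checked directly: $H'(z)=1-\rho-\rho\log w\geq 0$ because $1-\rho\geq 0$ and $\log w\leq 0$. With all derivatives nonnegative and the boundary and support conditions in hand, Lemma~\ref{lem:valid-pgf} gives that $H$ is a valid PGF of an $\mathbb{N}$-valued count variable. I would also remark that these $\rho$-ranges are exactly those induced by the constraints on $(\gamma,\delta)$ in Theorem~\ref{thm:discrete-stable-pgf} through $H=(1/\lambda)\log G+1$, which identifies $\rho=\delta/(\delta-\gamma)$ for $\alpha\neq 1$ and $\rho=\gamma/\delta$ for $\alpha=1$, so the admissible interval for $\rho$ is inherited rather than postulated.
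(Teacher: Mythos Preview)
Your proof is correct and follows essentially the same approach as the paper: verify the conditions of Lemma~\ref{lem:valid-pgf} and $H(0)=0$ by computing all derivatives and tracking signs, with the constraints on $\rho$ arising from $H'(0)\geq 0$ (your $w=1$) and the sign of $1-\rho$ from $H''\geq 0$. The only differences are cosmetic---your substitution $w=1-z$ and your ordering (higher derivatives first, then $k=1$) versus the paper's direct computation in $z$ and its blanket claim $H^{(k)}(z)\geq H^{(k)}(0)$---and your closing remark tying $\rho$ back to $(\gamma,\delta)$ anticipates Theorem~\ref{thm:dstable-cpoi}.
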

\begin{proof}
We must show that $H(z)$ satisfies the conditions of Lemma \ref{lem:valid-pgf}. It is clear that $H(1)=1$, $H(z)$ is continuous on $z\in[0,1]$, and $H(0)=0$ (which ensures the support excludes zero) for all $\alpha\in(0,2]$. For absolute monotonicity, consider two cases:

\textbf{Case of} $\mathbf{\alpha\neq 1}$. The derivatives are:
\begin{align*}
H'(z)&=\rho + (1-\rho)\alpha (1-z)^{\alpha-1}\\
H^{(2)}(z)&= (1-\rho)\alpha(1-\alpha)(1-z)^{\alpha-2}\\
H^{(k)}(z)&= (1-\rho)\alpha(1-z)^{\alpha-k}\prod_{j=1}^{k-1}(j-\alpha)
\end{align*}
It is clear that $H^{(k)}(z)$ is finite for all $z<1$. Setting $H'(0)\geq 0$ yields $\rho\geq -\alpha/(1-\alpha)$ when $\alpha\in(0,1)$ and $\rho\leq \alpha/(\alpha-1)$ when $\alpha\in (1,2]$. Setting $H^{(2)}(0)\geq 0$ yields $(1-\rho)\alpha(1-\alpha)\geq 0$. This means for $\alpha\in(0,1)$ that $\rho\leq 1$ and for $\alpha\in (1,2]$ that $\rho\geq 1$. For higher derivatives, if $\alpha\in (0,1)$ all terms are nonnegative at $z=0$. If $\alpha\in (1,2]$ the negative $(1-\rho)$ term is canceled by the negative $(1-\alpha)$ term so the function is nonnegative overall at $z=0$. Since $H^{(k)}(z)\geq H^{(k)}(0)\geq 0$ for all $z\in(0,1)$, $H(z)$ is absolutely monotone on $[0,1]$.

\textbf{Case of} $\mathbf{\alpha= 1}$. The derivatives are:
\begin{align*}
H'(z) &= 1-\rho\big(1+\log(1-z)\big)\\
H^{(2)}(z) &= \rho(1-z)^{-1}\\
H^{(k)}(z) &= \rho(k-2)!(1-z)^{k-1}
\end{align*}
These are all finite for $z<1$. Setting $H'(0)\geq 0$ yields $\rho\leq 1$. Setting $H^{(2)}(0)\geq 0$ yields $\rho\geq 0$. All higher derivatives are nonnegative at $z=0$ if $\rho\in [0,1]$. Again, $H^{(k)}(z)\geq H^{(k)}(0)\geq 0$ for all $z\in (0,1)$, which establishes absolute monotonicity.
\end{proof}

The ordinary Sibuya distribution \cite{sibuyaGeneralizedHypergeometricDigamma1979,devroyeTriptychDiscreteDistributions1993} arises as a special case where $\rho=0$. This is only feasible when $\alpha\in (0,1]$, and for $\alpha=1$ is simply a point mass at one. We use the term ``broad'' instead of ``generalized'' to avoid confusion with the apparently unrelated distributions described by \cite{huilletMittagLefflerDistributionsRelated2016} and \cite{kozubowskiGeneralizedSibuyaDistribution2018}.

\begin{theorem} \textit{Discrete stable as compound Poisson}
\label{thm:dstable-cpoi}

The $\mathcal{DS}(\alpha,\gamma,\delta)$ distribution of Theorem \ref{thm:discrete-stable-pgf} is equivalent to $Poi\bigvee bSib(\rho,\alpha)$. If $\alpha\neq 1$, then $\delta=\lambda\rho$ and $\gamma=\lambda(\rho-1)$. If $\alpha=1$, then $\delta=\lambda$ and $\gamma=\lambda\rho$.
\end{theorem}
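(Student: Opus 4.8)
The plan is to verify the equivalence directly by substituting the broad Sibuya PGF of Proposition \ref{prop:bsib} into the compound Poisson form of Definition \ref{def:compound} and matching against the discrete stable PGF of Equation \ref{eq:discrete-stable-pgf}. For a proposed rate $\lambda>0$ and shape $\rho$, I would compute $\lambda\big(H(z)-1\big)$ from Equation \ref{eq:bsib-pgf} and compare it with $\log G(z)$. When $\alpha\neq 1$ this gives $\lambda(H(z)-1)=-\lambda\rho(1-z)-\lambda(1-\rho)(1-z)^\alpha$, whereas $\log G(z)=-\delta(1-z)+\gamma(1-z)^\alpha$; equating the coefficients of the linearly independent functions $(1-z)$ and $(1-z)^\alpha$ forces $\delta=\lambda\rho$ and $\gamma=\lambda(\rho-1)$. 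When $\alpha=1$ the same substitution gives $\lambda(H(z)-1)=-\lambda(1-z)+\lambda\rho(1-z)\log(1-z)$, which against $\log G(z)=-\delta(1-z)+\gamma(1-z)\log(1-z)$ forces $\delta=\lambda$ and $\gamma=\lambda\rho$. These are exactly the relations claimed.

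Next I would invert these relations to exhibit a bSib summand for every $\mathcal{DS}(\alpha,\gamma,\delta)$. For $\alpha\neq 1$ the system gives $\lambda=\delta-\gamma$ and $\rho=\delta/(\delta-\gamma)$, and for $\alpha=1$ it gives $\lambda=\delta$ and $\rho=\gamma/\delta$. The substance of the proof is then to check that these values are admissible—namely $\lambda>0$ and $\rho$ in the range of Proposition \ref{prop:bsib}—exactly when $(\gamma,\delta)$ obeys the constraints of Theorem \ref{thm:discrete-stable-pgf}, i.e. the sign conditions of Equation \ref{eq:dilation-constr} together with $\delta\geq\alpha\gamma$.

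For this constraint-matching step I would proceed as follows. Positivity of $\lambda$ follows from the sign conditions: for $\alpha\in(0,1)$ we have $\gamma<0$ and $\delta\geq\alpha\gamma>\gamma$ (since $\gamma(\alpha-1)>0$), so $\lambda=\delta-\gamma>0$; the case $\alpha\in(1,2]$ is symmetric with $\gamma>0$; and for $\alpha=1$ we have $\lambda=\delta>0$. The strict bound $\rho<1$ (resp. $\rho>1$) for $\alpha<1$ (resp. $\alpha>1$) is equivalent to $\gamma<0$ (resp. $\gamma>0$), while $\rho\in[0,1]$ for $\alpha=1$ follows from $0\leq\gamma\leq\delta$. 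The remaining one-sided bound $\rho\geq -\alpha/(1-\alpha)$ for $\alpha\in(0,1)$, and $\rho\leq\alpha/(\alpha-1)$ for $\alpha\in(1,2]$, is where the translation constraint enters: clearing the positive factors $\delta-\gamma=\lambda$ and $|1-\alpha|$ reduces each inequality to precisely $\delta\geq\alpha\gamma$. Hence the two sets of constraints correspond bijectively under the parameter map, which both produces the compound Poisson representation and identifies its summand as broad Sibuya.

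I expect the main obstacle to be boundary bookkeeping rather than the core algebra. In particular I would handle with care the degenerate and edge cases: $\gamma=0$ with $\alpha=1$ reducing to an ordinary Poisson (summand $\rho=0$, a point mass at one); the strict-stability boundaries ($\delta=0$ for $\alpha\in(0,1)$ or $\gamma=0$ for $\alpha=1$); and confirming that the admissible region never forces $\lambda=0$ except in the trivial point-mass-at-zero limit. Existence of \emph{some} compound Poisson representation is already assured because $\mathcal{DS}$ is DID (Lemma \ref{lem:cpoi-did}); the content of the theorem is the explicit and unique identification of the summand, which the coefficient matching above supplies.
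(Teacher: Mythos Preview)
Your proposal is correct and follows essentially the same route as the paper: substitute the bSib PGF into the compound Poisson form, read off the parameter identifications $\delta=\lambda\rho$, $\gamma=\lambda(\rho-1)$ (resp.\ $\delta=\lambda$, $\gamma=\lambda\rho$), and then verify that the admissible $(\rho,\lambda)$ region from Proposition~\ref{prop:bsib} coincides with the $(\gamma,\delta)$ constraints of Theorem~\ref{thm:discrete-stable-pgf}. Your explicit inversion $\lambda=\delta-\gamma$, $\rho=\delta/(\delta-\gamma)$ is a small addition the paper leaves implicit, but the constraint-matching algebra is the same. One caution: your closing remark that DID already guarantees a compound Poisson representation via Lemma~\ref{lem:cpoi-did} inverts the paper's logical order---in the paper, DID of $\mathcal{DS}$ is deduced \emph{from} this theorem, so you should not invoke it as prior input (though your actual argument does not depend on it).
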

\begin{proof}
The compound Poisson PGF of $Poi\bigvee bSib(\rho,\alpha)$ is given by $G(z)=\exp\left\{\lambda\big(H(z)-1\big)\right\}$ where $\lambda>0$ and $H(z)$ is the bSib PGF with $H(0)=0$. 

\textbf{Case of} $\mathbf{\alpha\neq 1}$:
\begin{align*}
G(z)&=\exp\left\{\lambda\big(1-\big[\rho(1-z)+(1-\rho)(1-z)^\alpha\big]-1\big)\right\}\\
&= \exp\left\{\lambda\rho(z-1)-\lambda(1-\rho)(1-z)^\alpha\right\}
\end{align*}
which matches Equation \ref{eq:discrete-stable-pgf} with $\delta=\lambda\rho$ and $\gamma=\lambda(\rho-1)$. For the parameter constraints, if $\alpha\in(0,1)$, $\rho<1$ is equivalent to $\gamma<0$. If $\alpha\in(1,2]$, $\rho>1$ is equivalent to $\gamma>0$. The constraint of $\rho\geq -\alpha/(1-\alpha)$ does not constrain $\gamma$ since $\lambda$ can be arbitrarily large. However, note that this is equivalent to the constraint $\delta\geq \alpha\gamma$. For $\alpha\in(0,1)$:
\begin{align*}
\rho &\geq \frac{-\alpha}{1-\alpha}\\
\rho(1-\alpha)&\geq -\alpha\\
\lambda\rho&\geq \lambda\alpha(\rho-1)\\
\delta &\geq \alpha\gamma
\end{align*}
Similarly for $\alpha\in (1,2]$:
\begin{align*}
\rho &\leq \frac{\alpha}{\alpha-1}\\
\rho(\alpha-1)&\leq \alpha\\
\alpha\rho-\alpha&\leq \rho\\
\lambda\rho&\geq \lambda\alpha(\rho-1)\\
\delta &\geq \alpha\gamma
\end{align*}
Finally, $\lambda>0$ is equivalent to $\delta>\gamma$ which is redundant with the constraint $\delta>\alpha\gamma$.

\textbf{Case of} $\mathbf{\alpha=1}$:
\begin{align*}
G(z)&=\exp\left\{\lambda\big(z + \rho(1-z)\log(1-z)-1\big)\right\}\\
&= \exp\left\{\lambda(z-1)+\lambda\rho(1-z)\log(1-z)\right\}
\end{align*}
which matches Equation \ref{eq:discrete-stable-pgf} with $\delta=\lambda$ and $\gamma=\lambda\rho$. Clearly $\rho\geq 0$ is equivalent to $\gamma\geq 0$ and $\rho\leq 1$ is equivalent to $\delta\geq \gamma=\alpha\gamma$. If $\rho=0$ the bSib distribution reduces to a point mass at one, and the compound Poisson reduces to an ordinary Poisson with rate $\delta=\lambda$. 
\end{proof}

A straightforward consequence of Lemma \ref{lem:cpoi-did} and Theorem \ref{thm:dstable-cpoi} is

\begin{corollary}
The broadly discrete stable distributions are DID.
\end{corollary}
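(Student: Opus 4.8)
The plan is to chain together the two immediately preceding results, so this will be a short argument. First I would observe that, by Definition \ref{def:discrete-stable}, the broadly discrete stable distributions are simply a subclass of the full discrete stable family $\mathcal{DS}(\alpha,\gamma,\delta)$ characterized in Theorem \ref{thm:discrete-stable-pgf} (namely those that are discrete stable but not strictly so). Theorem \ref{thm:dstable-cpoi} already establishes that \emph{every} member of $\mathcal{DS}(\alpha,\gamma,\delta)$ admits the compound Poisson representation $Poi\bigvee bSib(\rho,\alpha)$, with the stated correspondence between $(\lambda,\rho)$ and $(\gamma,\delta)$ and with the $bSib$ parameters lying in the valid range of Proposition \ref{prop:bsib}. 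In particular this representation holds for the broadly discrete stable members, so nothing case-specific needs to be checked for the broad case.

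Having placed each broadly discrete stable distribution inside the compound Poisson class, I would then invoke Lemma \ref{lem:cpoi-did}, which asserts that a count distribution is DID if and only if it is compound Poisson. Applying the ``only if'' direction (compound Poisson $\Rightarrow$ DID) to the representation from the previous step immediately yields that each broadly discrete stable distribution is DID, completing the argument.

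I do not anticipate any genuine obstacle: the substantive work has already been carried out in proving Theorem \ref{thm:dstable-cpoi}, and the corollary is just the composition of that theorem with the DID characterization of Lemma \ref{lem:cpoi-did}. The only point I would take care to confirm is that the parameter constraints of Proposition \ref{prop:bsib} are compatible with the broadly discrete stable parameter regime (e.g.\ $\alpha\in(1,2]$ forcing $\rho>1$, or $\alpha=1$ with $\gamma>0$ forcing $\rho\in(0,1]$), but this compatibility is precisely what the parameter-matching calculations inside the proof of Theorem \ref{thm:dstable-cpoi} already verify, so it requires no new computation.
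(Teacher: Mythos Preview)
Your proposal is correct and matches the paper's approach exactly: the corollary is stated there as ``a straightforward consequence of Lemma \ref{lem:cpoi-did} and Theorem \ref{thm:dstable-cpoi}'', with no further argument given. One cosmetic slip: the implication ``compound Poisson $\Rightarrow$ DID'' is the \emph{if} direction of Lemma \ref{lem:cpoi-did}, not the ``only if'' direction.
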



\subsection{Discrete self-decomposability}

The class of self-decomposable distributions is a strict subset of the ID class, and a strict superset of the stable class \cite{steutelInfiniteDivisibilityProbability2003}. 

\begin{definition}
\label{def:self-decomp}
A RV $X\sim F$ is \textit{self-decomposable} if for all $\rho\in [0,1]$ its CF satisfies
\[\phi_X(t)=\phi_X(\rho t)\phi_\rho(t)\]
with $\phi_\rho(t)$ a valid CF.
\end{definition}
In other words, $X\overset{d}{=}\rho X' + X_\rho$ where $X'\sim F$, and $X_\rho$ is some other random variable independent of $X'$. All nondegenerate self-decomposable distributions are absolutely continuous \cite{steutelInfiniteDivisibilityProbability2003}. A discrete analog proposed by \cite{steutelDiscreteAnaloguesSelfDecomposability1979} replaces scaling with dilation.

\begin{definition}
\label{def:self-decomp-discrete}
A count variable $X\sim F$ is \textit{discretely self-decomposable} if for all $\rho\in [0,1]$ its PGF satisfies
\[G(z;X)=G(1+\rho(z-1);X)G_\rho(z)\]
where $G_\rho(z)$ is a valid PGF.
\end{definition}
In other words, $X\overset{d}{=}\rho\circ X'+X_\rho$. Both the self-decomposable and discretely self-decomposable classes are unimodal, and it has previously been shown that strict discrete stability implies discrete self-decomposability \cite{steutelDiscreteAnaloguesSelfDecomposability1979}. Although we have already shown that broad discrete stability implies discrete infinite divisibility, a further constraint on the parameters is needed to ensure discrete self-decomposability.

\begin{proposition}
\label{prop:self-decomp}
A broadly discrete stable distribution $\mathcal{DS}(\alpha,\gamma,\delta)$ is discretely self-decomposable if and only if its parameters satisfy
\[\delta\geq 
\begin{cases}
        \alpha^2\gamma & \alpha\neq 1\\
        2\gamma & \alpha = 1
\end{cases}
\]
\end{proposition}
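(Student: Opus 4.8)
The plan is to work directly from Definition \ref{def:self-decomp-discrete}, which requires that
\[G_\rho(z) = \frac{G(z;X)}{G(1+\rho(z-1);X)}\]
be a valid PGF for every $\rho\in[0,1]$. The first step is to compute this ratio explicitly from the discrete stable PGF of Equation \ref{eq:discrete-stable-pgf}. Using $1-(1+\rho(z-1)) = \rho(1-z)$ and taking logarithms, the denominator contributes $\rho(z-1)\delta + \gamma\rho^\alpha(1-z)^\alpha$ when $\alpha\neq 1$, so that
\[\log G_\rho(z) = (z-1)\,\delta(1-\rho) + \gamma(1-\rho^\alpha)(1-z)^\alpha.\]
That is, $G_\rho$ is again a discrete stable PGF with the same index $\alpha$ but with $\delta' = \delta(1-\rho)$ and $\gamma' = \gamma(1-\rho^\alpha)$. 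The analogous computation for $\alpha=1$ uses $\log(\rho(1-z)) = \log\rho + \log(1-z)$ and produces a discrete stable PGF with $\gamma'=\gamma(1-\rho)$ and $\delta' = \delta(1-\rho)+\gamma\rho\log\rho$.

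Because $G_\rho$ lies in the discrete stable family, I can invoke Theorem \ref{thm:discrete-stable-pgf} to decide its validity. The sign constraint on $\gamma'$ (Equation \ref{eq:dilation-constr}) holds automatically, since $1-\rho^\alpha$ and $1-\rho$ are nonnegative on $[0,1]$ and hence do not alter the sign of $\gamma$; and the endpoint $\rho=1$ simply gives $G_\rho\equiv 1$, the PGF of a point mass at $0$. Thus the only genuine requirement is the translation constraint $\delta'\geq\alpha\gamma'$, now imposed for \emph{all} $\rho\in[0,1]$. This reduces the proposition to a scalar inequality $h(\rho)\geq 0$ on $[0,1]$, where for $\alpha\neq 1$
\[h(\rho) = \delta(1-\rho) - \alpha\gamma(1-\rho^\alpha),\]
and $h_1(\rho) = (\delta-\gamma)(1-\rho)+\gamma\rho\log\rho$ for $\alpha=1$.

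The final step is to analyze these one-variable functions. In each case $h(1)=0$, so the constraint is active at the right endpoint rather than at $\rho=0$ (which only recovers the weaker divisibility constraint $\delta\geq\alpha\gamma$). For necessity, since $h(1)=0$ with $h\geq 0$ to its left, the left derivative must satisfy $h'(1)\leq 0$; computing $h'(1)=-\delta+\alpha^2\gamma$ (respectively $h_1'(1)=-\delta+2\gamma$) yields exactly the claimed bounds. For sufficiency I would propagate the endpoint condition over the whole interval: $h''(\rho)=\alpha^2\gamma(\alpha-1)\rho^{\alpha-2}$ has the sign of $\gamma(\alpha-1)$, which is positive in both admissible regimes, so $h'$ is increasing and bounded above by $h'(1)\leq 0$; for $\alpha=1$ the derivative $h_1'(\rho)=(-\delta+2\gamma)+\gamma\log\rho$ is a sum of two nonpositive terms. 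In either case $h$ is nonincreasing on $[0,1]$ with $h(1)=0$, hence $h\geq 0$ throughout.

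The main obstacle is this last step rather than the algebra: one must recognize that the binding constraint sits at $\rho\to 1^-$ and then globalize it. The convexity argument, via the sign of $h''$ (equivalently the monotonicity of $h'$), is what upgrades the necessary endpoint condition $h'(1)\leq 0$ to a condition sufficient for all $\rho$. Care is needed because when $\alpha\in(0,1)$ one has $h'(\rho)\to-\infty$ as $\rho\to 0^+$, so one cannot merely inspect $h'$ at the two endpoints but must genuinely use the monotonicity of $h'$ to conclude it is nonpositive on the entire interior.
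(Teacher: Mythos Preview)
Your proof is correct and lands on the same scalar inequality as the paper, with $G_\rho$ identified as a $\mathcal{DS}(\alpha,\gamma(1-\rho^\alpha),\delta(1-\rho))$ (respectively $\mathcal{DS}(1,\gamma(1-\rho),\delta(1-\rho)+\gamma\rho\log\rho)$) distribution whose validity reduces to the translation constraint of Theorem~\ref{thm:discrete-stable-pgf}. The routes differ in two minor respects. First, you compute $G_\rho$ directly as the quotient $G(z)/G(1+\rho(z-1))$, whereas the paper first invokes the discrete-stability identity $X\oplus\mu\overset{d}{=}\rho\circ X_1+(1-\rho^\alpha)^{1/\alpha}\circ X_2$ and identifies $X_\rho=(1-\rho^\alpha)^{1/\alpha}\circ X_2\oplus(-\mu)$; both arrive at the same PGF, but your computation is shorter and avoids carrying the explicit $\mu$. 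Second, in the one-variable analysis the paper studies the monotonicity and limiting value of $f(\rho)=(1-\rho^\alpha)/(1-\rho)$, showing $\sup f=\alpha$ (or $\inf f=\alpha$) is attained as $\rho\uparrow 1$, while you work with $h(\rho)=\delta(1-\rho)-\alpha\gamma(1-\rho^\alpha)$ and use $h(1)=0$, $h'(1)\le 0$ for necessity, and convexity of $h$ for sufficiency. These are equivalent formulations of the same calculus fact; your convexity argument is arguably cleaner because it handles both sign regimes of $\gamma$ uniformly and makes the necessity step (via the left derivative at $\rho=1$) explicit.
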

\begin{proof}
If $X\sim \mathcal{DS}(\alpha,\gamma,\delta)$, for every $\rho\in [0,1]$ we can find $X_1,X_2\sim \mathcal{DS}(\alpha,\gamma,\delta)$ such that
\begin{align*}
X\oplus \mu &\overset{d}{=}\rho\circ X_1 + (1-\rho^\alpha)^{1/\alpha}\circ X_2\\
X&\overset{d}{=}\rho\circ X_1 + \underbrace{(1-\rho^\alpha)^{1/\alpha}\circ X_2\oplus (-\mu)}_{X_\rho}
\end{align*}
$X$ is discretely self-composable iff $X_\rho$ has a valid PGF. From the proof of Theorem \ref{thm:discrete-stable-pgf}, the Poisson translation parameter is 
\[\mu = 
\begin{cases}
      \delta\big((1-\rho^\alpha)^{1/\alpha} - (1-\rho)\big) & \alpha\neq 1\\
      -\gamma\big(\rho\log\rho + (1-\rho)\log(1-\rho)\big) & \alpha=1.
    \end{cases}\]

\textbf{Case of} $\alpha\neq 1$: the PGF is
\begin{align*}
G(z;X_\rho)&=G(1+(1-\rho^\alpha)^{1/\alpha}(z-1);X)\exp\left(-\mu(z-1)\right)\\
&=\exp\left((1-\rho^\alpha)^{1/\alpha}(z-1)\delta+\gamma(1-\rho^\alpha)(1-z)^\alpha\right)\exp\left(-\delta\big((1-\rho^\alpha)^{1/\alpha} - (1-\rho)\big)(z-1)\right)\\
&= \exp\left((1-\rho)\delta(z-1)+\gamma(1-\rho^\alpha)(1-z)^\alpha\right)
\end{align*}
This is the form of a $DS\left(\alpha,\gamma(1-\rho^\alpha),(1-\rho)\delta\right)$ PGF, which is valid if the parameters satisfy (Theorem \ref{thm:discrete-stable-pgf})
\begin{equation}
\label{eq:self-decomp-alpha-n1}
(1-\rho)\delta\geq \alpha \gamma(1-\rho^\alpha)
\end{equation}
Let $f(\rho)=(1-\rho^\alpha)/(1-\rho)$ so that Equation \ref{eq:self-decomp-alpha-n1} is satisfied for all $\rho\in [0,1]$ iff 
\[\delta\geq \sup_{\rho \in [0,1]}\alpha\gamma f(\rho)\]
For $\alpha>1$, $\gamma>0$ and $f(\rho)\geq 1$ is strictly increasing with $\sup_{\rho\in [0,1]}f(\rho)=\alpha$. Therefore the assumption $\delta\geq \alpha^2\gamma$ implies
\[\delta\geq \alpha\gamma\sup_{\rho\in[0,1]}f(\rho)\geq \alpha\gamma f(\rho) \]
establishing the validity of $G(z;X_\rho)$. On the other hand, if $\delta\in[\alpha\gamma,\alpha^2\gamma)$, there exists some $\rho\in[0,1]$ such that Equation \ref{eq:self-decomp-alpha-n1} is not satisfied. For $\alpha<1$, $\gamma<0$ and $f(\rho)\leq 1$ is strictly decreasing with $\inf_{\rho\in [0,1]}f(\rho)=\alpha$. The assumption $\delta\geq \alpha^2\gamma$ then implies
\[\delta\geq \alpha\gamma\inf_{\rho\in[0,1]}f(\rho)\geq \alpha\gamma f(\rho) \]
and if $\delta\in [\alpha\gamma,\alpha^2\gamma)$ there exists some $\rho\in[0,1]$ such that Equation \ref{eq:self-decomp-alpha-n1} is not satisfied.
Therefore the condition $\delta\geq \alpha^2\gamma$ is both sufficient and necessary.

\textbf{Case of} $\alpha= 1$: the PGF is
\begin{align*}
G(z;X_\rho)&=G(1+(1-\rho)(z-1);X)\exp\left(-\mu(z-1)\right)\\
&=\exp\left((1-\rho)\delta(z-1)+\gamma(1-\rho)(1-z)\log\big((1-\rho)(1-z)\big)\right)\times\ldots\\
&\ldots\times\exp\left(\gamma\big(\rho\log\rho + (1-\rho)\log(1-\rho)\big)(z-1)\right)\\
&= \exp\left((z-1)\big[(1-\rho)\delta+\gamma\rho\log\rho\big]+\gamma(1-\rho)(1-z)\log(1-z)\right)
\end{align*}
This is the form of a $DS\left(1,\gamma(1-\rho),(1-\rho)\delta+\gamma\rho\log\rho\right)$ PGF, which is valid if the parameters satisfy (Theorem \ref{thm:discrete-stable-pgf}):
\begin{equation}
\label{eq:self-decomp-alpha-1}
(1-\rho)\delta+\gamma\rho\log\rho\geq \gamma(1-\rho)
\end{equation}
Define 
\[1-\frac{\rho}{1-\rho}\log\rho\]
so that Equation \ref{eq:self-decomp-alpha-1} is satisfied for all $\rho\in [0,1]$ iff 
\[\delta\geq \sup_{\rho \in [0,1]}\gamma f(\rho)\]

Since $f(\rho)\in [1,2]$ is strictly increasing on $\rho\in[0,1]$, and $\gamma\geq 0$, we have $\sup_{\rho\in[0,1]}f(\rho)=2$, so the assumption $\delta\geq 2\gamma$ implies
\[\delta\geq \gamma \sup_{\rho\in[0,1]}f(\rho)\geq \gamma f(\rho)\]
and if $\delta\in [\gamma,2\gamma)$ there exists some $\rho\in[0,1]$ such that Equation \ref{eq:self-decomp-alpha-1} is not satisfied. So again the condition is necessary and sufficient for discrete self-decomposability.
\end{proof}

Since discrete self-decomposability implies unimodality, if a broadly discrete stable distribution satisfies Proposition \ref{prop:self-decomp} then it is unimodal. A numerical algorithm for evaluation of Poisson-stable PMFs was provided by \cite{townesMixedPoissonFamilies2024}. This shows that for parameter combinations not satisfying the discrete self-decomposability constraints of Proposition \ref{prop:self-decomp}, multimodality is commonly observed (Figure \ref{fig:dstable-pmfs}).

\begin{figure}[tb]
\centering
\includegraphics[width=\linewidth]{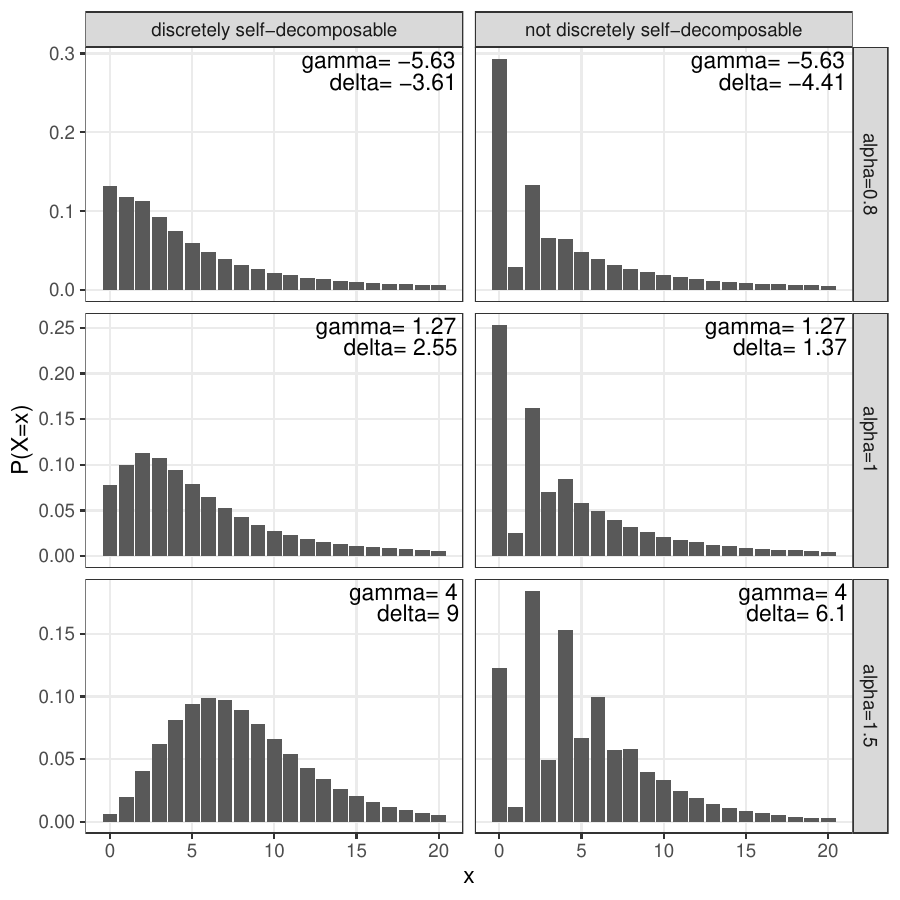}
\caption{\small Examples of discrete stable probability mass functions.}
\label{fig:dstable-pmfs}
\end{figure}

\section{Discussion}

We have investigated the probabilistic properties of broadly discrete stable distributions, including existence, uniqueness, infinite divisibility, and discrete self-decomposability. A key prerequisite to our results was the discovery that the mixing distribution of a Poisson mixture need not be restricted to nonnegative support. It is well known that if a nonnegative mixing distribution is ID, the corresponding Poisson mixture is DID. In the specific example of Poisson-stable mixtures considered here, this correspondence also holds even though the mixing distribution is real-valued. It would be interesting to determine the veracity of the following conjecture. Let $F$ be some real-valued, ID distribution that is valid for making a Poisson mixture (ie, it has a BLT that is completely monotone on $[0,1]$). Does it follow that the corresponding Poisson mixture is DID? Note that the usual Bernstein theory does not apply because the BLT may not be completely monotone on $[0,\infty)$ for real-valued distributions. 

Another interesting question is why some of the broadly discrete stable distributions are not discretely self-decomposable. In the real-valued notion of self-decomposability, one may consider subtracting a constant term from the ``remainder'' random variable as irrelevant, since $X_\rho-\mu$ is just as much a random variable as $X_\rho$. However, in the discrete case this distinction is important, because $X_\rho\oplus (-\mu)$ is not guaranteed to have a valid PGF for arbitrary $\mu>0$. If the discrete self-decomposability of \cite{steutelDiscreteAnaloguesSelfDecomposability1979} were relaxed to allow ``drift'', then it could be easily shown that all discrete stable distributions are discretely self-decomposable. However, the fact that this would permit multimodality in the discrete self-decomposable class seems to argue against this approach and we do not advocate it. Alternatively, if we wish to ensure all discrete stable distributions are discretely self-decomposable, perhaps the more restrictive parameter constraints of \ref{prop:self-decomp} are more appropriate.

In addition to exploring the above topics, future work should consider how the discrete stable family could be effectively used in stochastic process applications as well as statistical estimation and inference. 

\section{Acknowledgements}
Thanks to Arun Kumar Kuchibhotla, Valerie Ventura, and Larry Wasserman for helpful suggestions.

\newpage

\bibliographystyle{plain}
\bibliography{references}

\newpage

\begin{appendix}
\section{Proof of Theorem \ref{thm:discrete-stable-pgf}}
\label{sec:discrete-stable-pgf-proof}

We need to establish equivalence with Definition \ref{def:discrete-stable}.

\subsection{Proposed distribution has discrete stability property}
First, assume $X$ has a PGF as in Equation \ref{eq:discrete-stable-pgf} and show this implies the properties of a discrete stable distribution. The FCGF is given by
\begin{equation}
\label{eq:discrete-stable-fcgf}
C(t;X)= \log G(t+1) = 
  \begin{cases}
    t\delta+\gamma(-t)^\alpha & \alpha\neq 1\\
    t\delta+\gamma(-t)\log(-t) & \alpha=1
  \end{cases}
\end{equation}

Case of $\alpha\neq 1$. Let $X_1,X_2$ be independent copies of $X$.
\begin{align*}
C\big(t;\rho\circ X_1 &+ (1-\rho^\alpha)^{1/\alpha}\circ X_2\big) = C\big(\rho t;X) + C\big((1-\rho^\alpha)^{1/\alpha}t; X\big)\\
&= t\delta\big(\rho +(1-\rho^\alpha)^{1/\alpha}\big) +\gamma\left((-\rho t)^\alpha + \big(-(1-\rho^\alpha)^{1/\alpha} t\big)^\alpha\right)\\
&= t\delta\big(\rho +(1-\rho^\alpha)^{1/\alpha}\big)+\gamma(-t)^\alpha
\end{align*}

This is clearly the FCGF of $X\oplus \mu$ where $\mu=\delta\big((1-\rho^\alpha)^{1/\alpha} - (1-\rho)\big)$ which proves that $X$ is discrete stable. To establish strict discrete stability, we can clearly see that $\delta=0$ implies $\mu=0$ as required. But this is only possible for $\alpha<1$ since $\delta\geq \alpha\gamma > 0$ when $\alpha>1$.

Case of $\alpha=1$. 
\begin{align*}
C(t; \rho\circ X_1 &+(1-\rho)\circ X_2) = C(\rho t;X)+C((1-\rho)t;X)\\
&= t\delta(\rho +1-\rho)+\gamma\big((-\rho t)\log(-\rho t)+(-(1-\rho) t)\log(-(1-\rho) t)\big)\\
&= t\delta -\gamma t\big(\rho\log\rho + (1-\rho)\log(1-\rho)+(\rho+1-\rho)\log(-t)\big)\\
&= t\delta -t\gamma\big(\rho\log\rho + (1-\rho)\log(1-\rho)\big) + \gamma(-t)\log(-t)
\end{align*}
We recognize the FCGF of $X\oplus \mu$ where
\[\mu=-\gamma\big(\rho\log\rho + (1-\rho)\log(1-\rho)\big)\]
To establish strict discrete stability, it is clear that $\gamma=0$ implies $\mu=0$. 

\subsection{Proposed distribution is unique discrete stable family}

We now show that if $X$ is a RV satisfying Definition \ref{def:discrete-stable} it must have a PGF as in Equation \ref{eq:discrete-stable-pgf}. The approach is similar to that used by \cite{steutelDiscreteAnaloguesSelfDecomposability1979} in deriving the PGF for the strictly discrete stable case. By Corollary \ref{cor:zeroprob-discrete-stable} we may assume $\Pr(X=0)>0$.

\subsubsection{Poisson case}
If $X\sim Poi(\delta)$ with $\delta\geq 0$, the FCGF is $C(t;X)=\delta t$ and the conditions of Definition \ref{def:discrete-stable} are trivially satisfied regardless of $\alpha$. Further, the PGF $G(t)=\exp(\delta (z-1))$ trivially satisfies Theorem \ref{thm:discrete-stable-pgf} so long as $\gamma=0$. This implies $\alpha=1$ for Poisson (and the edge case of a point mass at zero). In the remainder we assume $X$ is not Poisson, and hence $\Pr(X=0)<1$.

\subsubsection{Case of \texorpdfstring{$\alpha\neq 1$}{alpha!=1}}
By assumption, for $\rho\in(0,1)$ there exists some $\mu\in \mathbb{R}$ such that the FCGF of $X$ satisfies
\begin{align*}
&C(t;X)+\mu t = C(\rho t; X)+C\big((1-\rho^\alpha)^{1/\alpha}t;X\big)\\
&C(t;X) +\delta t\big((1-\rho^\alpha)^{1/\alpha} - (1-\rho)\big)= C(\rho t; X)+C\big((1-\rho^\alpha)^{1/\alpha}t;X\big)\\
&C(t;X)-\delta t = C(\rho t; X)-\rho\delta t+C\big((1-\rho^\alpha)^{1/\alpha}t;X\big) - (1-\rho^\alpha)^{1/\alpha}\delta t
\end{align*}
where $\delta=\mu\big((1-\rho^\alpha)^{1/\alpha} - (1-\rho)\big)^{-1}$. Equivalently, the PGF of $X$ satisfies
\begin{align*}
&G(t+1)\exp(-\delta t) = G(\rho t+1)\exp(-\rho\delta t)G\big((1-\rho^\alpha)^{1/\alpha}t+1\big)\exp\left(-(1-\rho^\alpha)^{1/\alpha}\delta t\right)\\
&G(z)\exp(\delta (1-z)) = G(1-\rho(1-z))\exp(\rho\delta(1-z))G\big(1-(1-\rho^\alpha)^{1/\alpha}(1-z)\big)\exp\big((1-\rho^\alpha)^{1/\alpha}\delta (1-z)\big)\\
&G\big(1-(1-\rho^\alpha)^{1/\alpha}(1-z)\big)\exp\big((1-\rho^\alpha)^{1/\alpha}\delta (1-z)\big) = \frac{G(z)\exp(\delta (1-z))}{G(1-\rho(1-z))\exp(\rho\delta(1-z))}
\end{align*}
Let $h=(1-\rho)(1-z)$ so that $\rho=(1-z-h)/(1-z)$ and let $f(z)=\exp(\delta(1-z))$. Then
\begin{align*}
\lim_{\rho\uparrow 1}&\frac{1-G\big(1-(1-\rho^\alpha)^{1/\alpha}(1-z)\big)\exp\left(\big((1-\rho^\alpha)^{1/\alpha}\big)\delta (1-z)\right)}{(1-\rho)(1-z)}\\
&= \lim_{\rho\uparrow 1} \frac{G\big(1-\rho(1-z)\big)\exp\big(\rho\delta(1-z)\big) - G(z)e^{\delta (1-z)}}{(1-\rho)(1-z)G\big(1-\rho(1-z)\big)\exp\big(\rho\delta(1-z)\big)}\\
&= \lim_{h\downarrow 0} \frac{G(z+h)f(z+h)-G(z)f(z)}{h G(z+h)f(z+h)}\\
&= \frac{\frac{d}{dz} \left[G(z)f(z)\right]}{G(z)f(z)} = \frac{G'(z)}{G(z)}+\frac{f'(z)}{f(z)} = \frac{G'(z)}{G(z)}-\delta
\end{align*}
Let $u=(1-\rho^\alpha)^{1/\alpha}$ so that $\rho=(1-u^\alpha)^{1/\alpha}$. Then the previous result implies
\begin{align*}
\lim_{u\downarrow 0}& \frac{1-G(1-u(1-z))\exp(u\delta (1-z))}{(u(1-z))^\alpha}\\
&= \lim_{\rho\uparrow 1}\frac{1-G\big(1-(1-\rho^\alpha)^{1/\alpha}(1-z)\big)\exp\left(\big((1-\rho^\alpha)^{1/\alpha}\big)\delta (1-z)\right)}{(1-\rho)(1-z)}\left(\frac{(1-\rho)(1-z)}{(u(1-z))^\alpha}\right)\\
&= \left(\frac{G'(z)}{G(z)}-\delta\right)(1-z)^{1-\alpha}\lim_{\rho\uparrow 1} \frac{1-\rho}{1-\rho^\alpha}\\
&= \left(\frac{G'(z)}{G(z)}-\delta\right)(1-z)^{1-\alpha}\alpha^{-1}
\end{align*}
If we set $v=u(1-z)$ with $z\neq 0$, the above shows
\begin{equation}
\label{eq:my-3.4-alpha!1}
\lim_{v\downarrow 0} \frac{1-G(1-v)\exp(v\delta)}{v^\alpha} = \left(\frac{G'(z)}{G(z)}-\delta\right)(1-z)^{1-\alpha}\alpha^{-1}
\end{equation}
In the case that $z=0$, since $G(0)=\Pr(X=0)>0$ by Corollary \ref{cor:zeroprob-discrete-stable},
\begin{equation}
\label{eq:my-3.5-alpha!1}
\lim_{u\downarrow 0} \frac{1-G(1-u)\exp(u\delta)}{u^\alpha} = \left(\frac{p_1}{p_0}-\delta\right)\alpha^{-1}
\end{equation}
where $p_j=\Pr(X=j)$. Combining Equations \ref{eq:my-3.4-alpha!1} and \ref{eq:my-3.5-alpha!1} produces
\begin{equation}
\label{eq:my-3.6-alpha!1}
\frac{G'(z)}{G(z)} = \delta+\left(\frac{p_1}{p_0}-\delta\right)(1-z)^{\alpha-1} = \delta-\gamma\alpha(1-z)^{\alpha-1}
\end{equation}
where we have defined $\gamma=-(p_1/p_0-\delta)\alpha^{-1}$. Integrating over $z$ we obtain 
\[G(z) = \exp\left[\delta(z+c_1)+ \gamma\left((1-z)^\alpha+c_2\right)\right]\]
In order for $\lim_{z\uparrow 1}G(z)=1$, we must set $c_1=-1$ and $c_2=0$. The PGF is then given by
\begin{equation}
G(z) = \exp\left[\delta(z-1)+ \gamma(1-z)^\alpha\right]
\end{equation}
This is exactly the form required by Equation \ref{eq:discrete-stable-pgf}.

We will now verify that the constraints are also implied. First, note that if $\alpha\leq 0$ then $\lim_{z\uparrow 1}G(z)>1$ which causes $G(z)$ to not be a PGF. Therefore the constraint $\alpha>0$ is necessary. 
Since $X$ is not Poisson, $G(0)=\Pr(X=0)<1$ and $\gamma\neq 0$, which implies $\lim_{z\uparrow 1} G'(z)=\E[X]>0$. The first derivative is $G'(z)=G(z)r(z)$ where $r(z)=\delta-\gamma\alpha(1-z)^{\alpha-1}$. Since $\Pr(X=1)=G(0)r(0)\geq 0$ and $G(0)>0$ by Corollary \ref{cor:zeroprob-discrete-stable}, we must have $r(0)\geq 0$ which implies $\delta\geq \alpha\gamma$ for all $\alpha$.
Since $\lim_{z\uparrow 1} r(z)$ is not finite for $\alpha\in (0,1)$, we must have $\gamma<0$ and $\E[X]=\infty$. In the case of $\alpha>1$ the mean is finite with $\E[X]=\delta$. The second derivative is
\[G''(z)=G'(z)r(z)+G(z)r'(z)\]
with $r'(z)=\gamma\alpha(\alpha-1)(1-z)^{\alpha-2}$. For $\alpha>1$ we have
\begin{align*}
\var[X] &= \lim_{z\uparrow 1} G''(z) + \E[X]-\E[X]^2\\
&= \lim_{z\uparrow 1} G'(z)r(z)+G(z)r'(z) + \delta - \delta^2\\
&= (\delta^2)+(1)\lim_{z\uparrow 1}\gamma\alpha(\alpha-1)(1-z)^{\alpha-2} + \delta - \delta^2
\end{align*}
For $\alpha\in (1,2)$, the limit is not finite, so we must have $\gamma>0$ and $\var[X]=\infty$. For $\alpha>2$, $\var[X]=\E[X]$ implying $X$ must be Poisson, which is a contradiction. Therefore there are no discrete stable distributions with $\alpha>2$. For $\alpha=2$, $\var[X]=\delta + 2\gamma$, and the PGF is that of a Hermite distribution with $a_1=\delta-2\gamma$ and $a_2=\gamma$. See \cite{kempPropertiesHermiteDistribution1965} for a proof that $a_1$ and $a_2$ must be nonnegative, which here implies $\gamma>0$.
Finally, for strict discrete stability, it is clear that $\mu=0$ implies $\delta=0$ since we defined $\delta=\mu\big((1-\rho^\alpha)^{1/\alpha} - (1-\rho)\big)^{-1}$. But this is only possible for $\alpha\in (0,1)$ since we must have $\delta\geq \alpha \gamma > 0 $ for $\alpha\in (1,2]$. 

\subsubsection{Case of \texorpdfstring{$\alpha=1$}{alpha=1}}
By assumption, for $\rho\in(0,1)$ there exists some $\mu\in \mathbb{R}$ such that the FCGF of $X$ satisfies
\begin{align*}
&C(t;X)+\mu t = C(\rho t; X)+C((1-\rho)t;X)\\
&C(t;X)-\gamma t\big(\rho\log\rho + (1-\rho)\log(1-\rho)\big) = C(\rho t; X)+C((1-\rho)t;X)\\
&C(t;X) = C(\rho t; X)+\gamma t\rho\log\rho + C((1-\rho)t;X)+\gamma t(1-\rho)\log(1-\rho)
\end{align*}
where
\[\gamma = \frac{-\mu}{\rho\log\rho + (1-\rho)\log(1-\rho)}\]
Equivalently, the PGF of $X$ satisfies
\begin{align*}
\log G(t+1) &= \log G(\rho t + 1) +\gamma t\rho\log\rho + \log G((1-\rho)t+1)+\gamma t(1-\rho)\log(1-\rho)\\
G(z) &= G(1-\rho(1-z))\rho^{\gamma\rho(z-1)}G(1-(1-\rho)(1-z))(1-\rho)^{\gamma(1-\rho)(z-1)}\\
G(1-(1-\rho)(1-z)) &= \frac{G(z)}{G(1-\rho(1-z))\rho^{\gamma\rho(z-1)}(1-\rho)^{\gamma(1-\rho)(z-1)}}
\end{align*}
This implies
\begin{align*}
G(1-(1-\rho)(1-z)) \big((1-\rho)(1-z)\big)^{\gamma(1-\rho)(z-1)} &= \frac{G(z)\big((1-\rho)(1-z)\big)^{\gamma(1-\rho)(z-1)}}{G(1-\rho(1-z))\rho^{\gamma\rho(z-1)}(1-\rho)^{\gamma(1-\rho)(z-1)}}\\
&= \frac{G(z)(1-z)^{\gamma(1-\rho)(z-1)}}{G(1-\rho(1-z))\rho^{\gamma\rho(z-1)}}\\
&= \frac{G(z)(1-z)^{\gamma(z-1)}}{G(1-\rho(1-z))\big(\rho(1-z)\big)^{\gamma\rho(z-1)}}
\end{align*}
Let $h=(1-\rho)(1-z)$ so that $\rho=(1-z-h)/(1-z)$ and let $f(z)=(1-z)^{\gamma(z-1)}$.
\begin{align*}
\lim_{\rho\uparrow 1}& \frac{1-G(1-(1-\rho)(1-z))\big((1-\rho)(1-z)\big)^{\gamma(1-\rho)(z-1)}}{(1-\rho)(1-z)}\\
&= \lim_{\rho\uparrow 1} \frac{G(1-\rho(1-z))\big(\rho(1-z)\big)^{\gamma\rho(z-1)} - G(z)(1-z)^{\gamma(z-1)}}{(1-\rho)(1-z)G(1-\rho(1-z))\big(\rho(1-z)\big)^{\gamma\rho(z-1)}}\\
&= \lim_{h\downarrow 0} \frac{G(z+h)f(z+h)-G(z)f(z)}{h G(z+h)f(z+h)}\\
&= \frac{\frac{d}{dz}\left[G(z)f(z)\right]}{G(z)f(z)} = \frac{G'(z)}{G(z)}+\frac{d}{dz}\log f(z)\\
&= \frac{G'(z)}{G(z)} + \gamma(1+\log(1-z))
\end{align*}
Let $u=(1-\rho)$. The previous result implies
\[\lim_{u\downarrow 0} \frac{1-G(1-u(1-z))\big(u(1-z)\big)^{\gamma u(z-1)}}{u(1-z)} = \frac{G'(z)}{G(z)} + \gamma(1+\log(1-z))\]
If we set $v=u(1-z)$ with $z\neq 0$, the above shows
\begin{equation}
\label{eq:my-3.4-alpha1}
\lim_{v\downarrow 0} \frac{1-G(1-v)v^{-\gamma v}}{v} = \frac{G'(z)}{G(z)} + \gamma(1+\log(1-z))
\end{equation}
In the case that $z=0$, since $G(0)=\Pr(X=0)>0$ by Corollary \ref{cor:zeroprob-discrete-stable},
\begin{equation}
\label{eq:my-3.5-alpha1}
\lim_{u\downarrow 0} \frac{1-G(1-u)u^{-\gamma u}}{u} = \frac{p_1}{p_0}+\gamma
\end{equation}
where $p_j=\Pr(X=j)$. Combining Equations \ref{eq:my-3.4-alpha1} and \ref{eq:my-3.5-alpha1} produces
\begin{equation}
\label{eq:my-3.6-alpha1}
\frac{G'(z)}{G(z)} = \frac{p_1}{p_0}-\gamma\log(1-z) = \delta -\gamma(1+\log(1-z))
\end{equation}
Where we have defined $\delta=(p_1/p_0)-\gamma$. Integrating over $z$ we obtain
\[G(z) = \exp\left[\delta (z+c_1)+\gamma\big((1-z)\log(1-z)+c_2\big)\right]\]
In order for $\lim_{z\uparrow 1}G(z)=1$, we must set $c_1=-1$ and $c_2=0$. The PGF is then given by
\begin{equation}
\label{eq:my-3.7-alpha1}
G(z) = \exp\left[\delta(z-1)+ \gamma(1-z)\log(1-z)\right]
\end{equation}
This is exactly the form required by Equation \ref{eq:discrete-stable-pgf}. 

We now verify what constraints are needed to ensure $G(z)$ is a proper PGF. For $\gamma\neq 0$, consider the first derivative
\[G'(z)=G(z)r(z)\]
where 
\[r(z)=\delta - \gamma\left(1+\log(1-z)\right)\]
In order that $\Pr(X=1)\geq 0$ we require $G'(0)\geq 0$. Since by Corollary \ref{cor:zeroprob-discrete-stable}, $G(0)>0$, this implies $\delta\geq \gamma$. 
If $\gamma=0$, this is the PGF of a Poisson distribution with mean $\delta$. If $\Pr(X=0)=1$ then we must have both $\gamma=0$ and $\delta=0$. If $\Pr(X=0)<1$, then 
\begin{align*}
0&<\E[X]=\lim_{z\uparrow 1} G'(z) = \lim_{z\uparrow 1} G(z)r(z)\\
&= (1)\left(\delta - \gamma\big(1+\lim_{z\uparrow 1}\log(1-z)\big)\right)
\end{align*}
Since the limit is not finite, we must set $\gamma\geq0$ to ensure $\E[X]=\infty$. 
Finally, for strict discrete stability, it is clear that $\mu=0$ implies $\gamma=0$ since we defined
\[\gamma=\frac{-\mu}{\rho\log\rho + (1-\rho)\log(1-\rho)}\]
\end{appendix}

\end{document}